\numberwithin{equation}{section}
\newtheorem{theorem}{Theorem}[section]
\newtheorem{corollary}[theorem]{Corollary}
\newtheorem{lemma}[theorem]{Lemma}
\newtheorem{proposition}[theorem]{Proposition}
\theoremstyle{definition}
\newtheorem{definition}[theorem]{Definition}
\newtheorem{remark}[theorem]{Remark}
\newtheorem{example}[theorem]{Example}
\newcommand{\Id}{\mathbbmss{1}}
\DeclareMathOperator{\GL}{GL}
\DeclareMathOperator{\Hom}{Hom}
\DeclareMathOperator{\Lin}{Lin}
\newcommand{\catname}[1]{\textnormal{\texttt{#1}}}
\mathchardef\za="710B  
\mathchardef\zb="710C  
\mathchardef\zg="710D  
\mathchardef\zd="710E  
\mathchardef\zve="710F 
\mathchardef\zz="7110  
\mathchardef\zh="7111  
\mathchardef\zvy="7112 
\mathchardef\zi="7113  
\mathchardef\zk="7114  
\mathchardef\zl="7115  
\mathchardef\zm="7116  
\mathchardef\zn="7117  
\mathchardef\zx="7118  
\mathchardef\zp="7119  
\mathchardef\zr="711A  
\mathchardef\zs="711B  
\mathchardef\zt="711C  
\mathchardef\zu="711D  
\mathchardef\zvf="711E 
\mathchardef\zq="711F  
\mathchardef\zc="7120  
\mathchardef\zw="7121  
\mathchardef\ze="7122  
\mathchardef\zy="7123  
\mathchardef\zf="7124  
\mathchardef\zvr="7125 
\mathchardef\zvs="7126 
\mathchardef\zf="7127  
\mathchardef\zG="7000  
\mathchardef\zD="7001  
\mathchardef\zY="7002  
\mathchardef\zL="7003  
\mathchardef\zX="7004  
\mathchardef\zP="7005  
\mathchardef\zS="7006  
\mathchardef\zU="7007  
\mathchardef\zF="7008  
\mathchardef\zW="700A  
\newcommand{\be}{\begin{equation}}
\newcommand{\ee}{\end{equation}}
\newcommand{\ra}{\rightarrow}
\newcommand{\bea}{\begin{eqnarray}}
\newcommand{\eea}{\end{eqnarray}}
\newcommand{\beas}{\begin{eqnarray*}}
\newcommand{\eeas}{\end{eqnarray*}}
\def\*{{\textstyle *}}
\newcommand{\nn}{\nonumber}
\newcommand{\N}{\mathbb{N}}
\newcommand{\Z}{\mathbb{Z}}
\newcommand{\R}{\mathbb{R}}
\newcommand{\pa}{\partial}
\newcommand{\ti}{\times}
\def\tU{\tilde U}
\def\Hom{\mathsf{Hom}}
\def\bp{{\boxplus}}
\def\cJ{{\mathcal J}}
\def\cH{{\mathcal H}}
\def\cO{{\mathcal O}}
\def\cA{\mathcal{A}}
\def\Sec{\operatorname{Sec}}
\def\Lin{\mathsf{Lin}}
\def\bp{{\mathbf p}}
\def\bq{{\mathbf q}}
\def\bu{{\mathbf u}}
\def\sT{{\mathsf T}}
\def\xd{\mathrm{d}}
\def\xi{\tx{i}}
\def\dt{\xd_{\sT}}
\def\dim{\operatorname{dim}}
\def\GL{\operatorname{GL}}
\def\Id{\operatorname{Id}}
\def\tx{\tilde{x}}
\def\tzx{\tilde{\zx}}
\def\tv{{\tilde{v}}}
\newcommand{\id}{\mathrm{id}}
\def\bza{{\boldsymbol\za}}
\def\n{\nabla}
\newcommand{\mn}{{\medskip\noindent}}
\newcommand{\no}{{\noindent}}
\def\z2{{\Z_2^n}}
\def\Fr{\operatorname{Fr}(E)}
\title[Principal bundles in the category of $\Z_2^n$-manifolds]{Principal bundles\\ in the category of $\Z_2^n$-manifolds}
 \author{Andrew James Bruce \& Janusz Grabowski}
\date{\today}
\begin{document}

\begin{abstract}
We introduce and examine the notion of principal $\Z_2^n$-bundles, i.e., principal bundles in the category of $\Z_2^n$-manifolds. The latter are higher graded extensions of supermanifolds in which a $\z2$-grading replaces $\Z_2$-grading. These extensions have opened up new areas of research of great interest
in both physics and mathematics. In principle, the geometry of $\z2$-manifolds is essentially different than that of supermanifolds, as for $n>1$ we have formal variables of even parity, so local smooth functions are formal power series. On the other hand, a full version of differential calculus is still valid. We show in this paper that the fundamental properties of classical principal bundles can be generalised to the setting of  this `higher graded' geometry, with properly defined frame bundles of $\z2$-vector bundles as canonical examples. However, formulating these concepts and proving these results relies on many technical upshots established in earlier papers. A comprehensive introduction to $\z2$-manifolds is therefore included together with basic examples.
\par

\bigskip\no{\bf Keywords:};
 supermanifold; graded manifold, principal bundle; supergroup action; functor of points.\par

\mn{\bf MSC 2020:}~14A22;~18F15;~58A50;~58D19.
\end{abstract}

\maketitle

\setcounter{tocdepth}{2}

\tableofcontents
\section{Introduction}
It is well known that principal bundles have many applications in topology, differential geometry, and mathematical physics. In particular, gauge fields are, globally, connections on a principal bundle. Principal superbundles have been studied in the different approaches to supermanifolds, though there are relatively few comprehensive works in the literature (for the locally ringed space approach see \cite{Bartocci:1991, Carmeli:2018,Kessler:2019,Kruglikov:2021,Stavracou:1998}). We remark that principal superbundles were applied to supergauge theories quite early on in the development of supergeometry (see \cite{Almorox:1987}). It should be noted that field configurations in (pure) supergravity (ignoring potential auxiliary fields) are Cartan connections locally given by one-forms over space-time with values in the Poincar\'{e} Lie superalgebra. The relation between Cartan geometry and supergravity is somewhat ``old news'', but was made very explicit by Edler \cite{Edler:2020} in 2023. However, in the physics literature, it is often the case that global considerations, such as non-trivial principal bundles, are not explored. A discussion of supergauge theory and supergravity is outside the scope of this paper, we point out that these concepts are mathematically related to principal superbundles. Presumably, ``higher graded'' gauge theory would rely on similar constructions. It should be noted that frame bundles are important examples of principal bundles. In general relativity,  local Lorentz transformations and general coordinate transformations are naturally dealt with using frame bundles. Indeed, part of the motivation for this paper was to understand frame bundles in the setting of $\Z_2^n$-manifolds. Irrespective of possible links with physics, developing generalisations of principal superbundles is a mathematically interesting pursuit with many subtleties.     \par
In this paper, we discuss the foundational elements of the theory of principal bundle structures in the category of $\Z_2^n$-manifolds: a novel and non-trivial generalisation of supermanifolds (see \cite{Covolo:2016,Covolo:2016a,Covolo:2016b,Covolo:2016c}). Our definition of a principal bundle uses local trivialisations and $\Z_2^n$-Lie group actions that preserve the fibres (see  Definition \ref{def:PrinBun}). $\Z_2^n$-Lie groups and their actions are a subject that has barely been explored (see \cite{Bruce:2020a, Bruce:2020b} for the first works in this area).  We use the functor of points to define $\Z_2^n$-Lie groups and their actions on $\Z_2^n$-manifolds. Alternatively, one could develop a Hopf algebra-based approach, however, the functor of points approach allows us to mimic the classical setting quite closely. This gives a ``clean'' formulation that differential geometers and physicists can follow without the need for Hopf algebras and similar. Recall that the category of Lie supergroups is equivalent to the category of super Harish--Chandra pairs (loosely, a Lie group and a Lie superalgebra with some compatibility conditions, see \cite{Carmeli:2011} and references therein). We refrain from developing the  $\Z_2^n$ analogs of super Harish--Chandra pairs, though, presumably, this can be done.  We remark that under the umbrella of noncommutative geometry, Hopf--Galois extensions are seen as a noncommutative analogue of a principal bundle (see [Chapter 5] \cite{Beggs:2019}). Other approaches have been developed, including coalgebra bundles (see \cite{Brzezinski:1998}). \par

Historically,  principal bundles were defined via a free proper action on a manifold, the local triviality coming as a consequence. In the context of supermanifolds and $\Z_2^n$-manifolds, a map is proper if the underlying map of topological spaces is proper. However, we will take the fibre bundle structure as part of our initial definition. \par
We show, with a little care, that the well-known constructions and results in the theory of classical principal bundles generalise to the setting of $\Z_2^n$-geometry. The results are quite similar to the supermanifold case, but the proofs require delicate handling compared with the classical proofs.   We denote a principal $\Z_2^n$-bundle with base $M$ and group structure $G$ as $P(M,G)$.  Our main results are:
\begin{enumerate}
\itemsep1em
\item Underlying a principal $\Z_2^n$-bundle is a classical principal bundle (Proposition \ref{prop:RedPrinBun}),
\item Given atlases of $M$ and $G$, one can construct an atlas of $P(M,G)$ (Proposition \ref{prop:Atlas}),
\item The associated orbit space $P\slash G$ is $M$ (Proposition \ref{prop:OrbitSpace}),
\item Fixing $M$ and $G$, all morphisms of such principal $\Z_2^n$-bundle are isomorphisms (Proposition \ref{prop:AllIso}),
\item A principal $\Z_2^n$-bundle is trivial if and only if it admits a global section (Proposition \ref{prop:GlobSecTriv}),
\item $P(M,G)$ can be defined via a collection of maps that satisfy the cocycle-condition (Theorem \ref{thm:CocyGlue}),
\item We use the results and constructions found earlier in this paper to build an original concept of a $\z2$-vector bundle and its frame bundle in Section \ref{sec:Fbundles}.
\end{enumerate}
\smallskip

\noindent \textbf{Arrangement.} In Section \ref{sec:Z2nGeom} we review the theory of $\Z_2^n$-manifolds, including the functor of points as needed in the next section. We proceed with the main definitions and results of this paper in Section \ref{sec:PrinBun}. We define $\z2$-vector bundles and the corresponding frame bundles as canonical examples of principal $\z2$-bundles in Section \ref{sec:Fbundles}. We end in Section \ref{sec:Conc} with some concluding remarks.
\section{Rudiments of $\Z_2^n$-geometry}\label{sec:Z2nGeom}
\subsection{$\Z_2^n$-manifolds as locally ringed spaces}
We will assume some familiarity with supergeometry and $\Z_2^n$-geometry. There are no new results in the following subsections, rather we recap the essential results needed.  To set notation, by $\Z_2^n$ we mean the abelian group $\Z_2 \times \Z_2 \times \cdots \times \Z_2$ where the Cartesian product is over $n$ factors.  Once an ordering has been fixed, we will denote elements of $\Z_2^n$ as $\gamma_i$ for $i = 0,1 \cdots , N$, where  $N = 2^n-1$. We will set $ \mathbf{0} := \gamma_0 = (0,0, \cdots, 0)$ for later convenience. For the ordering of elements of  $\Z_2^n$, we will fill in the zeros and the ones from the left,  and then place the even elements first and then the odd elements.\par
A \emph{$\Z_2^n$-manifold} is understood in the sense of Covolo--Grabowski--Poncin  \cite{Covolo:2016,Covolo:2016a}, i.e,  as a locally ringed space $M =(|M|, \cO_M)$ such that for `small enough' open subsets $|U| \subset |M|$, $\cO_M(|U|) \cong C^\infty(|U|)[[\zx]]$. Here $\zx^\alpha$ are  the \emph{formal variables} that are assigned a non-zero $\Z_2^n$-degree and are subject to the relation
$$ \zx^\alpha \, \zx^\beta = (-1)^{\langle \deg(\alpha) , \deg(\beta)\rangle}\, \zx^\beta \, \zx^\alpha\,,$$
where $\deg(\alpha) :=  \deg(\zx^\alpha) \in  \Z_2^n \setminus \{\mathbf{0} \}$ and
$${\langle \deg(\alpha) , \deg(\beta)\rangle}=\sum_i\deg(\alpha)_i\deg(\zb)_i.$$
We set $\dim(M) = p|\mathbf{q}$, where $\mathbf{q} = (q_1, q_2, \cdots , q_N)$ where $p := \dim(|M|)$ and we have $q_i$ formal variables of $\Z_2^n$-degree $\gamma_i$ ($i >0$). The simplest example is $\R^{p|\mathbf{q}}$, where $|M|=\R^p$. A formal variable $\zx^\za$ we call \emph{odd} if $\big(\zx^\za\big)^2=0$, i.e., $|\za|={\langle \deg(\alpha) , \deg(\alpha)\rangle}$ is odd, and \emph{even} in the other case. Note that, in contrast with the standard $\Z_2$-case, for $n>1$ even formal coordinates do not generally commute with the others and we are dealing with formal power series in the even variables. This is crucial for obtaining a consistent differential calculus on $\Z_2^n$-manifolds. \par
An \emph{open $\Z_2^n$-submanifold} of $M$ is a $\Z_2^n$-manifold of the form $U = (|U|, \cO_M|_{|U|})$, with $|U| \subset |M|$ is open. The underlying manifold $|M|$ we will refer to as the \emph{reduced manifold}.  \emph{Morphisms of $\Z_2^n$-manifolds} are morphisms of locally ringed spaces. That is, a morphism $\phi : M \rightarrow N$ consists of a pair $ \phi = (|\phi|, \phi^*  )$, where $|\phi| : |M| \rightarrow |N|$ is a continuous map (in fact, smooth) and  $\phi^*$  is a family of $\Z_2^n$-graded $\Z_2^n$-commutative ring morphisms $\phi^*_{|V|} : \cO_N(|V|) \rightarrow \cO_M\big( |\phi|^{-1}(|V|)\big)$, that respect the restriction maps for every open $|V| \subset |N|$. The category of $\Z_2^n$-manifolds we will denote as $\Z_2^n\catname{Man}$. \par
The \emph{ideal sheaf} $\mathcal{J}_M$ is defined by
$\mathcal{J}(|U| ) := \langle f \in \cO_M(|U|)~|~  f~\textnormal{is of non-zero}~ \Z_2^n \textnormal{degree} \rangle$. In other words,
$$C^\infty_{|U|}=(\cO_M(|U|)/\mathcal{J}(|U|).$$
The $\mathcal{J}_M$-adic topology on $\cO_M$ can then be defined in the obvious way. Note that the ideal sheaf is not nilpotent, as it is for standard supermanifolds. However, the structure sheaf of a $\Z_2^n$-manifold $\cO_M$ is Hausdorff complete with respect to the $\mathcal{J}_M$-adic topology (see \cite[Proposition 7.9]{Covolo:2016} for details). In particular, we have a short exact sequence of sheaves of $\Z_2^n$-graded $\Z_2^n$-commutative associative $\R$-algebras
\begin{equation}\label{eqn:SES}
0\longrightarrow\ker\zve\longrightarrow\cO_M\stackrel{\zve}{\longrightarrow}C^\infty_{|M|}\longrightarrow 0\,,
\end{equation}
where $\ker\zve=\cJ_M$. We thus have a canonical morphisms of $\Z_2^n$-manifolds $\iota : |M| := (|M|, C^\infty_{|M|}) \rightarrow (|M|,\cO_M)=: M$. While morphism $M \rightarrow |M|$ do exist,  none are canonical. Thus, we have a Batchelor-Gaw{\c e}dzki (\cite{Gawedzki:1977}) type theorem for $\Z_2^n$-manifolds (see \cite{Covolo:2016a}).
\subsection{Atlases and coordinates}
Due to the local structure of a  $\Z_2^n$-manifold  of dimension $p|\mathbf{q}$ we know that for a `small enough' $|U| \subset |M|$ there exists an isomorphism of $\Z_2^n$-manifolds
$$\psi : U \longrightarrow  \mathcal{U}^{p|\mathbf{q}} = (\mathcal{U}^p , C^\infty_{\mathcal{U}^p}[[\zx]])\,,$$
where $\mathcal{U}^p \subset \R^p$ is open and $\mathbf{q} = (q_1, q_2, \cdots , q_{2^n-1})$. This allows us to employ local coordinates $x^A:=(x^a, \zx^\alpha)$, where $x^a$ form a coordinate system on $\mathcal{U}^p$, where $p := \dim(|M|)$, and $\zx^\za$ are formal coordinates with degrees in $\Z^n_2$ such that we have $q_i$ formal variables of $\Z_2^n$-degree $\gamma_i$ ($i >0$). A pair $(U, \psi)$ we refer to as a \emph{(coordinate) chart}, and a family of charts $\{(U_i , \psi_i)  \}_{i \in \mathcal{I}}$ we refer to as an \emph{atlas}  if the family $\{ |U_i| \}_{i \in \mathcal{I}}$ forms an open cover of $|M|$.  Coordinate transformations are constructed as follows. Suppose we have two charts
\begin{align*}
&\psi_i :  U_i \stackrel{\sim}{\rightarrow} \mathcal{U}_i^{p|\mathbf{q}}= \big(\mathcal{U}^p_i, C^\infty_{\mathcal{U}^p_i}[[\zx]] \big), && \psi_j :  U_j \stackrel{\sim}{\rightarrow} \mathcal{U}_j^{p|\mathbf{q}}= \big(\mathcal{U}^p_j, C^\infty_{\mathcal{U}^p_j}[[\eta]]\big),
\end{align*}
where we assume that $|U_{ij}|:= |U_i|\cap |U_j| \neq \emptyset $. We then set
\begin{align*}
& |\psi_i|\big( |U_{ij}|\big):= \mathcal{U}^p_{ij}, &&|\psi_j|\big( |U_{ij}|\big):= \mathcal{U}^p_{ji}.
\end{align*}
As standard, we have a homomorphism (not writing out the obvious restrictions)
$$|\psi_{ij}| := |\psi_j|\circ |\psi_i|^{-1} : \mathcal{U}^p_{ij} \longrightarrow \mathcal{U}^p_{ji}.$$
We need to define what happens to sections, and via the Chart Theorem, we know it is sufficient to describe this in terms of the coordinates.  Specifically,
$$\psi^*_{ij} :=  (\psi_i^*)^{-1} \circ \psi_j^* :  C^\infty(\mathcal{U}^p_{ji})[[\eta]] \longrightarrow C^\infty(\mathcal{U}^p_{ji})[[\zx]],$$
provides the required $\Z_2^n$-graded  $\Z_2^n$-commutative algebra isomorphism.   In coordinates, we write $\psi^*_{ij}(y^b ,\eta^B) =  (y^b(x, \zx),  \eta^B(x, \zx))$ and similar. Thus we have a family of $\Z_2^n$-manifold morphisms
$$\psi_{ij} : \mathcal{U}^{p|\mathbf{q}}_{ij} \rightarrow \mathcal{U}^{p|\mathbf{q}}_{ji}\,,$$
that satisfy the standard gluing conditions.
\begin{example}
The $\Z_2^2$-circle is defined as $\mathbb{S}^{1|1,1,1} =\mathbb{S}^1 \times \R^{0|1,1,1}$. We proceed to construct an atlas as follows. First, we build a chart on the circle $\mathbb{S}^1$ as
\begin{align*}
&U_1 =  \mathbb{S}^1 - \{(1,0)\},  &&U_2 =  \mathbb{S}^1 - \{(-1,0)\}\\
&\phi_1(x,y) = \arctan(y/x), \quad 0 < \phi_1(x,y)< 2 \pi && \phi_2(x,y) = \arctan(y/x),\quad  -\pi < \phi_1(x,y)<  \pi
\end{align*}
The inverses are $\phi^{-1}_{1/2}(\theta) =  (\cos \theta, \sin \theta)$. The transition functions can easily be seen to be
\begin{align*}
&\psi^*_{21}\theta_2 =
\begin{cases}
\theta_1 & \text{if } \theta_2 > 0 \\
\theta_1- 2\pi & \text{if } \theta_2 < 0
\end{cases}
&&
\psi^*_{12}\theta_1 =
\begin{cases}
\theta_2 & \text{if } \theta_1 < \pi \\
\theta_1+ 2\pi & \text{if } \theta_1 > \pi
\end{cases}
\end{align*}
We now need to define formal coordinates on each of the opens and describe the coordinate changes. We define the coordinates $(z_i, \eta_i, \chi_i)$ over $U_i$ (where $i = 1,2$) of degrees $(1,1), (0,1)$ and $(1,0)$, respectively. As we have a Cartesian product, the coordinate transformations are trivial, i.e.,
\begin{align*}
\psi^*_{21}(z_2, \eta_2, \chi_2) = (z_1, \eta_1, \chi_1), &\qquad
\psi^*_{12}(z_1, \eta_1, \chi_1) = (z_2, \eta_2, \chi_2)\,.
\end{align*}
\end{example}
Much like the case for manifolds, one can construct global geometric objects by gluing local geometric objects. That is, we can consider a $\Z_2^n$-manifold as being covered by  $\Z_2^n$-domains together with the appropriate gluing information, i.e., admissible coordinate transformations.   Importantly, we have the \emph{Chart Theorem} (\cite[Theorem 7.10]{Covolo:2016}) that allows us to write morphisms of $\Z_2^n$-manifolds in terms of the local coordinates.  Specifically, suppose we have two  $\Z_2^n$-domains $\mathcal{U}^{p|\mathbf{q}}$ and $\mathcal{V}^{r|\mathbf{s}}$. Then morphisms $\phi: \mathcal{U}^{p|\mathbf{q}} \longrightarrow \mathcal{V}^{r|\mathbf{s}}$  correspond  to \emph{graded unital $\R$-algebra morphisms}
 \begin{equation*}
 \phi^* : C^{\infty}\big(\mathcal{V}^r \big)[[\eta]] \longrightarrow   C^{\infty}\big(\mathcal{U}^p \big)[[\zx]],
 \end{equation*}
which are fully determined by their coordinate expressions. Note that such algebra morphisms, by definition, preserve the $\Z_2^n$-degree.

\subsection{Local differential calculus}
Using local coordinates $(x^a, \zx^\alpha)$, a function (section of the structure sheaf) is of the form (using multi-index notation)
\begin{equation}
f(x, \zx) = \sum_{\bm{\alpha}\in \N^{\times |\bm{q}|}}  \, \zx^{\bm \alpha} f_{\bm \alpha}(x)\,,
\end{equation}
where $\bm{q} = (q_1, q_2, \cdots, q_N) \in \N^{2^n-1}$, and $|\bm{q}| = \sum_{i=1}^{2^n-1}q_i$. The components $f_{\bm \alpha}(x)$ are smooth functions in the degree $\bm 0$ coordinates.  Note, that we have a formal power series in even formal coordinates, while the odd formal coordinates are nilpotent. \par
Partial derivatives with respect to the degree $\bm 0$ coordinates are defined as standard using limits. Note
$$\frac{\partial}{\partial x^a}(\zx^\alpha \mbox{---}) = \zx^\alpha \frac{\partial}{\partial x^a}(\mbox{---})\,.$$
Partial derivatives with respect to the formal coordinates are defined algebraically:
\begin{equation}
\frac{\partial \zx^\beta}{\partial \zx^\alpha}= \delta^\beta_\alpha\, ,
\end{equation}
which is extended to formal power series via
\begin{align}
\frac{\partial}{\partial \zx^\alpha}(\zx^{\beta_1}\zx^{\beta_2}\cdots \zx^{\beta_p}) &= \delta^{\beta_1}_\alpha\zx^{\beta_2}\cdots \zx^{\beta_p} +
(-1)^{\langle \deg(\alpha), \deg(\beta_1)\rangle}\, \zx^{\beta_1}\delta^{\beta_2}_\alpha\cdots \zx^{\beta_p} \\
\nonumber &+ \cdots (-1)^{\langle \deg(\alpha), \deg(\beta_1)+ \deg(\beta_2) + \cdots \deg(\beta_{p-1})\rangle}\, \zx^{\beta_1}\zx^{\beta_2}\cdots \delta^{\beta_p}_\alpha\,.
\end{align}
Setting $x^A = (x^a, \zx^\alpha)$, we have
$$\frac{\partial}{\partial x^A} \frac{\partial}{\partial x^B} =(-1)^{\langle\deg(A), \deg(B) \rangle}\,\frac{\partial}{\partial x^B} \frac{\partial}{\partial x^A}\,. $$
Clearly, for odd formal coordinates $\left(\frac{\partial}{\partial \zx}\right)^2 = 0$.
\subsection{The Cartesian product}
It is known that the category $\Z_2^n\catname{Man}$ admits all finite products, for details the reader should consult \cite{Bruce:2018,Bruce:2018b}. One can construct an atlas on $M \times N$ using atlases on $M$ and $N$ in more or less the same way as one can in the category of supermanifolds.\par
Let $M = (|M|, \cO_M)$ and $N = (|N|,\cO_N)$ be two $\Z_2^n$-manifolds of dimension $p|\textbf{q}$ and $r|\textbf{s}$, respectively.  The Cartesian products $|U|\times |V|$, where $|U|\in |M|$, $|V|\in |N|$ are open, form a basis of the product topology of $|M|\times |N|$. If we have coordinate charts $\{(U_i, \psi_i) \}_{i \in \mathcal{I}}$ and $\{(V_j, \phi_j)\}_{j \in \mathcal{J}}$, with coordinates $(x_i, \zx_i)$ and $(y_j, \eta_j)$ on $M$ and $N$, respectively, then we can define the Cartesian product as follows.  First, we denote the basis of the product topology generated by $|U_i|\times |V_i|$ (here both the underlying topological spaces of the coordinate charts) as $\mathcal{B}$. Recall that working with a basis of a topology rather than all opens is sufficient for the gluing of sheaves.
\begin{definition}
Let $M$ and $N$ be $\Z_2^n$-manifolds of dimension $p|\mathbf{q}$ and $r|\mathbf{s}$, respectively. The \emph{product $\Z_2^n$-manifold} $M\times N$, of dimension $p+r| \mathbf{q}+\mathbf{s}$, is the locally $\Z_2^n$-ringed space $(|M|\times |N|, \cO_{M\times N})$, where $|M|\times |N|$ is equipped with the product topology, and $\cO_{M\times N}$ is the sheaf glued from the sheaves $C^\infty_{|U_i|\times|V_j|}(x_i , y_j)[[\zx_i, \eta_j]]$ associated to the base $\mathcal{B}$, i.e.,
$$\cO_{M\times N}|_{|U_i|\times  |V_j|} \simeq C^\infty_{|U_i|\times|V_j|}(x_i , y_j)[[\zx_i, \eta_j]]\,.$$
\end{definition}
It was shown in \cite{Bruce:2018b} that the product $\Z_2^n$-manifold is a $\Z_2^n$-manifold and satisfies all the required properties to be a categorical product. \par
We also have the \emph{fundamental isomorphism} (recall that the algebras in question here are nuclear Fr\'echet)
\begin{equation}
C^\infty(|U|\times |V|)[[\zx,\eta]] \simeq C^\infty(|U|)[[\zx]]\widehat{\otimes}_\R C^\infty(|V|)[[\eta]]\,,
\end{equation}
where the completion is taken with respect to any locally convex topology on the tensor product $ C^\infty(|U|)[[\zx]] \otimes_\R C^\infty(|V|)[[\eta]]$.
We construct the projection map $\pi_M : M\times N \rightarrow M$, as the $\Z_2^n$-manifold morphism whose underlying continuous map $|\pi_M| =: \pi_{|M|}:  |M|\times |N| \rightarrow |M|$ is the standard smooth projection, and  using the basis $\mathcal{B}$,
$$\pi_M^*|_{|U|} : \cO_M(|U|)  \rightarrow  \cO_M(|U|)\widehat{\otimes}_\R \cO_M(|V|)\,,$$
is the inclusion of (topological) algebras.
\subsection{The tangent space and surjective submersions}
We will draw heavily on \cite{Covolo:2021} in this subsection. The \emph{tangent space} $\sT_m M$ at $m \in |M|$ is the $\Z_2^n$-graded $\R$-vector space of $\Z_2^n$-graded $\R$-linear derivations $\delta :  \cO_{M, m}\rightarrow \R$. It is clear that if $\dim(M) = p|\mathbf{q}$ then $\dim_R(\sT_m M) =  p|\mathbf{q}$. Moreover, given a coordinate system $(x^a, \zx^\alpha)$ centred on $m \in |M|$, we have a basis of  $\sT_m M$ given by $(\partial_{x^a}|_ m, \partial_{\zx^\alpha}|_ m)$. Any $\Z_2^n$-graded derivation $X : \cO_M(|U|) \rightarrow \cO_M(|U|)$ induces a $\Z_2^n$-graded derivation at the level of stalks $X|_{m} : \cO_{M,\, m} \rightarrow \cO_{M,\, m}$, of the same $\Z_2^n$-degree as $X$ (assuming homogeneity), for any $m \in |U|$. We denote $\epsilon_{m} : \cO_{M,\, m} \rightarrow C^\infty_{m}$ as the algebra morphism induced by the pullback to the reduced manifold of $M$. Furthermore the evaluation morphism at $m$ we denote as $\textrm{ev}_{m} : C^\infty_{m} \rightarrow \R$. We then define $X_{m} \in \sT_{m}M$ as the tangent vector
$$X_{m} := (\textrm{ev}_{m} \circ \epsilon_{m} \circ X|_{m}). $$
\begin{definition}
Let $\phi : M \rightarrow N$ be a morphism of $\Z_2^n$-manifolds such that $|\phi|(m) =n$ for some $m \in |M|$. The \emph{tangent map} of $\phi$ at $m$ is the  degree zero  morphism of $\Z_2^n$-vector spaces
 $$(\sT \phi)_m : \sT_m M\rightarrow \sT_n N$$
given by
$$(\sT \phi)_m(v):= v \circ \phi^*_m \,,$$
for all  $v \in \sT_m M$. Moreover, $\phi$ is a \emph{surjective submersion} if for all points $m \in |M|$ the tangent map $\sT \phi_m$ is surjective.
\end{definition}
Using coordinates $(x^a, \zx^\alpha)$ and $(y^b, \theta^\beta)$ on $M$ and $N$, respectively, we get
$$(\sT \phi)_m  \simeq \begin{pmatrix}
\textrm{ev}_m \epsilon (\partial_x y) & \textrm{ev}_m \epsilon (\partial_\zx y) \\
\textrm{ev}_m \epsilon (\partial_x \theta) & \textrm{ev}_m \epsilon (\partial_\zx \theta)
\end{pmatrix} = \begin{pmatrix}
\textrm{ev}_m \epsilon (\partial_x y) & 0 \\
0 & \textrm{ev}_m \epsilon (\partial_\zx \theta)
\end{pmatrix}\, ,$$
which is a diagonal real-valued matrix. The following lemma is thus evident.
\begin{lemma}\label{lem:SurSub}
If $\pi =(|\pi|, \pi^*) : M \rightarrow N$ is a surjective submersion, then $|\pi| : |M| \rightarrow |N|$ is a surjective submersion of smooth manifolds.
\end{lemma}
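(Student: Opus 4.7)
The plan is to extract the classical statement directly from the block-diagonal shape of the tangent map matrix displayed immediately before the lemma. First I would fix an arbitrary point $m\in|M|$, set $n=|\pi|(m)$, and choose coordinate charts $(x^a,\xi^\alpha)$ on $M$ near $m$ and $(y^b,\theta^\beta)$ on $N$ near $n$. By the Chart Theorem the morphism $\pi$ is then completely encoded by the two families of formal series $\pi^*(y^b)$ and $\pi^*(\theta^\beta)$, from which the matrix representing $(\sT\pi)_m$ in the basis $(\partial_{x^a}|_m,\partial_{\xi^\alpha}|_m)$ may be read off.

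The key identification I would then make explicit is that applying $\textrm{ev}_m\circ\epsilon_m$ to $\pi^*(y^b)$ recovers the value at $m$ of the ordinary smooth pullback $|\pi|^*(y^b)$. This holds because $\epsilon$ is precisely the reduction morphism in the short exact sequence (\ref{eqn:SES}) and, by naturality, it intertwines $\pi^*$ with $|\pi|^*$ on the quotient. Because the formal partial derivative $\partial_{x^a}$ with respect to a degree-$\mathbf 0$ coordinate preserves the ideal sheaf $\cJ_M$, one also has $\epsilon_m\circ\partial_{x^a}=\partial_{x^a}\circ\epsilon_m$ at the level of stalks. Consequently the degree-$\mathbf 0$ diagonal block $\textrm{ev}_m\epsilon(\partial_x y)$ appearing in the displayed matrix is exactly the classical Jacobian of the smooth map $|\pi|$ at $m$ in the induced charts on $|M|$ and $|N|$.

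From here the argument is pure linear algebra. The displayed matrix is block diagonal with one block per $\Z_2^n$-degree, so surjectivity of $(\sT\pi)_m$ as a $\Z_2^n$-graded $\R$-linear map is equivalent to surjectivity of each homogeneous block separately; in particular the degree-$\mathbf 0$ block is surjective. By the identification above, this is the statement that the classical differential $T|\pi|_m\colon T_m|M|\to T_n|N|$ is surjective. Since $m$ was arbitrary, $|\pi|$ is a submersion of smooth manifolds in the standard sense; surjectivity of $|\pi|$ on points is inherited from $\pi$, whose underlying continuous map is the same in both categories.

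The only genuinely delicate step is the compatibility check that $\textrm{ev}_m\circ\epsilon_m\circ\partial_{x^a}\pi^*(y^b)$ coincides with the classical partial derivative of $|\pi|^*(y^b)$ at $m$. This amounts to verifying that the algebraically defined formal partial derivative on $\cO_M$ descends, under $\epsilon$, to the ordinary partial derivative on $C^\infty_{|M|}$, which follows from $\partial_{x^a}(\cJ_M)\subset\cJ_M$ together with the Hausdorff completeness of $\cO_M$ in the $\cJ_M$-adic topology. All remaining work is then the block-decomposition observation above.
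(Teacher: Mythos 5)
Your proposal is correct and follows the same route the paper takes: the paper simply displays the block-diagonal form of $(\sT\pi)_m$ and declares the lemma ``evident,'' and your argument is precisely the elaboration of that observation, identifying the degree-$\mathbf{0}$ block with the classical Jacobian of $|\pi|$ via the compatibility of $\epsilon$ with $\partial_{x^a}$ and reading off surjectivity of that block from surjectivity of the graded map. The extra care you take with $\epsilon_m\circ\partial_{x^a}=\partial_{x^a}\circ\epsilon_m$ and with point-surjectivity of $|\pi|$ is exactly what the paper leaves implicit.
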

Due to \cite[Proposition 4.3]{Covolo:2021}, we always have adapted coordinates $(x^A, y^B)$ (of all degrees) such that, symbolically, the surjective submersion is locally the projection $(x^A, y^B) \mapsto x^A$. More carefully, let $\phi : M \rightarrow N$ be a morphism of $\Z_2 ^n$-manifolds, where the dimension of $M$ is $p|\mathbf{q}$ and the dimension of $N$ is $r|\mathbf{s}$. We require $p \geq r$ and $q_i \geq s_i$ for all $i$. Furthermore, let us fix some arbitrary point $m \in |M|$. Then $\phi$ is a submersion at $m$  if and only if there exists charts $(U, \psi)$ with coordinates $(x, \zx)$ centred on $m$ and $(V, \chi)$ with coordinates $(y, \eta)$ centred on $\phi(m)\in N$, such that $\phi|_{|U|}: U \rightarrow V$ has the form
\begin{align*}
\phi^*(y^a) = x^a  \qquad 1\leq a  \leq r\,, && \phi^*(\eta^\alpha_\gamma) = \zx^\alpha_\gamma  \qquad 1\leq \alpha \leq s_\gamma\,,
\end{align*}
for all $\gamma \in \Z_2^n\setminus \{\mathbf{0}\}$.
\begin{remark}
The $\Z_2^n$ analogues of the inverse function theorem, the implicit function theorem, and the constant rank theorem were presented and proved in   \cite{Covolo:2021}.
\end{remark}
\subsection{The functor of points}
In this subsection we rely heavily on results presented in \cite{Bruce:2020a,Bruce:2018b}. Grothendieck's functorial approach to algebraic geometry (see \cite{Grothendieck:1973a}) has proven extremely useful in supergeometry and is essential in later sections of this paper. For any $S = (|S|, \cO_S) \in \Z_2^n\catname{Man}$, the set of \emph{$S$-points on $M$} is defined as
$$M(S) := \Hom_{\Z_2^n\catname{Man}}(S, M) \,.$$
We have a contravariant functor $M(-) : \Z_2^n\catname{Man}^{\textrm{op}} \rightarrow \catname{Set}$ from the category of $\Z_2^n$-manifolds to sets which fully determines $M$. Note that not all such functors are representable and we will refer to non-representable functors as \emph{generalised $\Z_2^n$-manifolds}. The fundamental result here is \emph{Yoneda's lemma}, which states that any morphism $\phi :  M \rightarrow N$ defines a natural transformation $\phi_- : M(-) \rightarrow N(-)$ and any natural transformation between $M(-)$ and $N(-)$ arises from a unique morphism between the $\Z_2^n$-manifolds.  A corollary of Yoneda's lemma is that two $\Z_2^n$-manifolds are diffeomorphic if and only if $M(-)$ and $N(-)$ are naturally isomorphic, i.e., $M(S) \cong N(S)$ for all $S\in \Z_2^n\catname{Man}$.\par
A fundamental and important result here is \cite[Theorem 9]{Bruce:2018b}, which allows us to identify morphisms between $\Z_2^n$-manifolds $\phi : M \rightarrow N$ with morphisms of (unital) $\Z_2^n$-graded $\Z_2^n$-commutative algebras $\phi^* : \cO_N(|N|) \rightarrow \cO_M(|M|)$. Thus, in terms of the functor of points, we have the useful result
$$M(S) := \Hom_{\Z_2^n\catname{Man}}(S, M) \cong \Hom_{\Z_2^n\catname{Alg}}(\cO_M(|M|), \cO_S(|S|))\,.$$
Here $\Z_2^n\catname{Alg}$ is the category of $\Z_2^n$-graded associative algebras. Note that, by definition morphisms in this category preserve the $\Z_2^n$-degree. Moreover, if the algebras are unital, as all the algebras we encounter in this paper are, we insist that morphisms respect the units. \par
Given any point $m \in |M|$ we can consider it as a morphism of $\Z_2^n$-manifolds $m := (|m|, m^*): \R^{0|\mathrm{0}} \rightarrow M$ with $|m| :  \R^0 \rightarrow |M|$ being the assignment of a single element $\star$ to the point $m \in |M|$.  The pullback is the evaluation at $m$, i.e., $m^* := \textrm{ev}_m \circ \epsilon_m : \cO_{M, m} \rightarrow \R$, where $\textrm{ev}_m:  C^\infty_m \rightarrow \R$ is the classical evaluation map, and $\epsilon_m : \cO_{M,m}\rightarrow C_m^\infty$ is the map induced by the sheaf morphism $\epsilon : \cO_M \rightarrow C^\infty_{|M|}$. It can be shown that $\R^{0|\mathbf{0}}$ is a terminal object in $\Z_2^n\catname{Man}$, that is, for every $S\in \Z_2^n\catname{Man}$ there is a unique map $!_S :  S \rightarrow \R^{0|\mathbf{0}}$. This, of course induces a map $M(!_S): M(\R^{0|\mathbf{0}}) \rightarrow M(S)$ defined as $m \mapsto m_S := m \, \circ \,!_S$. \par
 It is known that $\Z_2^n$-manifolds whose underlying topological space is just a single point, so $\Z_2^n$-manifolds of the form $\R^{0|\mathbf{q}} = (\star, \R[[\zx]])$, are sufficient for the functor of points. We refer to such $\Z_2^n$-manifolds as \emph{$\Z_2^n$-points}, the category of which we denote as $\Z_2^n\catname{Pts}$, (see \cite[Theorem 3.8]{Bruce:2020a}).\par
We set $\Lambda := \R[[\zx]]$ and refer to such algebras as $\Z_2^n$-Grassmann algebras and denote the corresponding category as $\Z_2^n\catname{GrAlg}$.  We have an equivalence of categories between $\Z_2^n\catname{GrAlg}$  and $\Z_2^n\catname{Pts}^{\textrm{op}}$. More technically, we have the functor
\begin{align*}
\mathcal{P} :& ~  \Z_2^n\catname{GrAlg} \longrightarrow \Z_2^n\catname{Pts}^{\textrm{op}}\\
 & \mathcal{P}(\Lambda) :=  (\star, \Lambda),
\end{align*}
and given a morphism $\varphi^* : \Lambda \rightarrow \Lambda'$ we set $\mathcal{P}(\varphi^*) := (|\varphi|, \varphi^*)$, where $|\varphi|$ is the identification of the underlying points of $\mathcal{P}(\Lambda)$ and $\mathcal{P}(\Lambda')$. The $\Lambda$-points of a $\Z_2^n$-manifold $M$ is defined as
 $$M(\Lambda) := M(\mathcal{P}(\Lambda)) = \Hom_{\Z_2^n\catname{Man}}(\mathcal{P}, M) \simeq \Hom_{\Z_2^n\catname{Alg}}(\Lambda, \cO_M(|M|)\,.$$
Consider an open cover $\{ |U_i|\}_{i \in \mathcal{I}}$ of $|M|$ and the open $\Z_2^n$-submanifolds $U_i = (|U_i|, \cO_M|_{|U_i|})$, we have the important result (see \cite[Lemma 3.13 \& 3.14]{Bruce:2020a})
$$M(\Lambda) \simeq \bigcup_{i \in \mathcal{I}} U_i(\Lambda) \simeq \bigcup_{x \in |M|} \Hom_{\Z_2^n\catname{Alg}}(\cO_{M,x}, \Lambda).$$
In particular, the sets $\{U_i(\Lambda) \}_{i \in \mathcal{I}}$ cover the set $M(\Lambda)$. This result allows many classical constructions to be extended to $\Z_2^n$-manifolds.  \par
\begin{remark}
$M(\Lambda)$ is not just a set, but a Fr\'echet $\Lambda_0$-manifold (here $\Lambda_0$ is the subalgebra of degree zero elements of $ \Lambda$). By restricting the natural transformations allowed, we get a full and faithful embedding of the category of $\Z_2^n$-manifolds into the category of contravariant functors from the category of $\Z_2^n$-points to the category of nuclear Fr\'echet manifolds over nuclear Fr\'echet
algebras. This embedding is the \emph{Schwarz--Voronov embedding}. For (technical) details see  \cite{Bruce:2020a}.
\end{remark}
The universal properties of the Cartesian product of two $\Z_2^n$-manifolds $M \times N$ (for details see \cite{Bruce:2018b}), directly implies that
$$\big( M \times N \big)(S) \cong M(S) \times N(S)\,.$$
This result will prove very useful when dealing with $\Z_2^n$-Lie group, their actions on $\Z_2^n$-manifolds and principal bundles.
\subsection{Fibre bundles}
As the Cartesian product of $\Z_2^n$-manifolds is well-defined as well as surjective submersions,  the notion of a fibre bundle is essentially the same as the classical one. However, one has to be a little careful as we are dealing with ringed spaces.
\begin{definition}\label{def:FibBun}
A \emph{$\Z_2^n$-fibre bundle $E=(E,M,\pi,F)$ with a typical fibre $F$} consists of a surjective submersion $\pi: E \rightarrow M$, with an atlas of \emph{local trivialisations}: an open cover $\{|U_i| \}_{i\in \mathcal{I}}$ of $|M|$ and diffeomorphisms $t_i : \pi^{-1}(U_i) \stackrel{\sim}{\rightarrow} U_i \times F$, such that the diagram
\begin{center}
\leavevmode
\begin{xy}
(0,15)*+{\pi^{-1}(U_i)}="a"; (30,15)*+{U_i \times F}="b";%
(30,0)*+{U_i}="c";%
{\ar "a";"b"}?*!/_3mm/{t_i };%
{\ar "a";"c"}?*!/^3mm/{\pi};{\ar "b";"c"}?*!/_5mm/{\textrm{prj}_1};%
\end{xy}\,,
\end{center}
where $U_i = (|U_i|,\cO_M|_{|U_i|})$, is commutative.
\end{definition}
For brevity, when all the structure is understood, we will use the nomenclature \emph{fibre bundle}. Coordinate charts on the product $M \times F$ allow us to employ local coordinates on $E$. Consider two atlases $\{(U_i, \psi_i)\}_{i \in \mathcal I}$ and $\{ (V_j, \chi_j)\}_{j \in \mathcal J}$ of $M$ and $F$, respectively. Via a refinement we can take $\{U_i \}_{\in \mathcal I}$ to be  open $\Z_2^n$-submanifolds that trivialise the  fibre bundle $E=(E,M,\pi,F)$. We then denote the  algebra map induced by the trivialisation as
$$(t^*_i)_{|U_i| \times |V_j|} : \cO_{M\times F}(|U_i|\times |V_j|) \stackrel{\sim}{\longrightarrow} \cO_E(|t_i|^{-1}(|U_i|\times |V_j|))\,.$$
From the construction of the Cartesian product (see \cite{Bruce:2018b}) we have diffeomorphisms
$$\psi_i \times \chi_j : U_i \times V_j \stackrel{\sim}{\rightarrow} \mathcal{U}_i^{p|\mathbf{q}} \times \mathcal{V}_j^{r|\mathbf{s}}\,,$$
and we write
$$\psi_i^* \,\widehat{\otimes}_\R \,\chi_j^* : C^\infty(\mathcal{U}_i^p \times \mathcal{V}_j^r)[[\zx, \eta]] \stackrel{\sim}{\rightarrow} \cO_{U_i \times V_j}(|U_i|\times |V_j|)\,,$$
using the fundamental isomorphism (see \cite[Theorem 4.2]{Bruce:2018b})
$$C^\infty(\mathcal{U}_i^p)[[\zx]] \, \widehat{\otimes}_\R \, C^\infty(\mathcal{V}_j^r)[[\eta]] \simeq C^\infty(\mathcal{U}_i^p \times \mathcal{V}_j^r)[[\zx, \eta]]\,. $$
 Then composing we can employ coordinates
$$(t^*_i)_{|U_i| \times |V_j|} \circ \psi_i^* \,\widehat{\otimes}_\R \,\chi_j^* : C^\infty(\mathcal{U}_i^p \times \mathcal{V}_j^r)[[\zx, \eta]]  \longrightarrow \cO_E(|t_i|^{-1}(|U_i|\times |V_j|))\,.$$
We then define
$$W_{ij} := (|W_{ij}|, \, \cO_E|_{|W_{ij}|})\,,$$
with $|W_{ij}| = |t_i|^{-1}(|U_i|\times |V_j|)\subseteq |E|$. We write $\phi_{ij} : W_{ij} \stackrel{\sim}{\rightarrow} \mathcal{U}_i^{p|\mathbf{q}}\times \mathcal{V}_j^{r|\mathbf{s}}$ with $\phi_{ij}= \psi_i \times \chi_j \circ t_i|_{|W_{ij}|}$.
\begin{proposition}\label{prop:FibAtlas}
Let $E=(E,M,\pi,F)$ be a fibre bundle and let $\{ (U_i, \psi_i)\}_{i \in \mathcal{I}}$ and $\{ (V_j, \chi_j)\}_{j \in \mathcal{J}}$ be atlases of $M$ and $F$, respectively. Furthermore, assume that $\{ |U_i|\}_{i \in \mathcal I}$ is a trivialising open cover of $|M|$. With the definitions above, $\{  (W_{ij}, \phi_{ij})\}_{ij \in \mathcal{I} \times \mathcal{J}}$ is an atlas on $E$ compatible with the bundle structure.
\end{proposition}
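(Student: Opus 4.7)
The plan is to verify three things: (a) that $\{|W_{ij}|\}_{ij \in \mathcal I \times \mathcal J}$ is an open cover of $|E|$; (b) that each $\phi_{ij}$ is an isomorphism of $\Z_2^n$-manifolds onto the $\Z_2^n$-domain $\mathcal U_i^{p|\mathbf{q}} \times \mathcal V_j^{r|\mathbf{s}}$; and (c) that the family respects the bundle projection $\pi$, i.e.\ in the coordinates induced by $\phi_{ij}$ the map $\pi$ is literally a projection onto the first factor.

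For (a), I would fix $e \in |E|$, set $m := |\pi|(e)$, and choose an index $i \in \mathcal I$ with $m \in |U_i|$. Then $|t_i|(e) = (m, f)$ for some $f \in |F|$, and since $\{|V_j|\}_{j \in \mathcal J}$ covers $|F|$, there exists $j$ with $f \in |V_j|$; by definition this means $e \in |W_{ij}|$. Openness of $|W_{ij}|$ follows because $|t_i|$ is a homeomorphism onto $|U_i| \times |F|$ and $|U_i| \times |V_j|$ is open in the product topology. Thus $W_{ij}$ is an open $\Z_2^n$-submanifold of $E$.

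For (b), the map $t_i$ restricted to the open $\Z_2^n$-submanifold $W_{ij}$ is a diffeomorphism onto $U_i \times V_j$ (as the restriction of a local trivialisation to an open subset carved out by the product structure on the target). Composing with the product diffeomorphism $\psi_i \times \chi_j : U_i \times V_j \stackrel{\sim}{\rightarrow} \mathcal U_i^{p|\mathbf{q}} \times \mathcal V_j^{r|\mathbf{s}}$ (which exists by the universal property of the product $\Z_2^n$-manifold and is a diffeomorphism since both $\psi_i$ and $\chi_j$ are) yields $\phi_{ij}$ as a composition of two diffeomorphisms, hence itself a diffeomorphism onto a $\Z_2^n$-domain. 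At the level of structure sheaves this uses exactly the algebra isomorphism $(t_i^*)_{|U_i|\times|V_j|} \circ (\psi_i^* \,\widehat{\otimes}_\R\, \chi_j^*)$ described before the statement, so the coordinates on $E$ over $|W_{ij}|$ are precisely the images of the standard coordinates on $\mathcal U_i^p \times \mathcal V_j^r$ together with the formal variables $(\zx, \eta)$.

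For (c), the trivialisation square in Definition \ref{def:FibBun} gives $\mathrm{prj}_1 \circ t_i = \pi|_{\pi^{-1}(U_i)}$, and $\mathrm{prj}_1 \circ (\psi_i \times \chi_j) = \psi_i \circ \mathrm{prj}_1$ on $U_i \times V_j$. Chasing these identities, the projection $\pi$ written in the coordinates $(x_i, \zx_i, y_j, \eta_j)$ of the chart $\phi_{ij}$ and the coordinates $(x_i, \zx_i)$ of the chart $\psi_i$ on $M$ is the canonical projection $(x_i, \zx_i, y_j, \eta_j) \mapsto (x_i, \zx_i)$, which is the desired compatibility. Transition maps $\phi_{i'j'} \circ \phi_{ij}^{-1}$ decompose as the transition maps of $M$ in the base coordinates and the fibre-wise transitions induced by $t_{i'} \circ t_i^{-1}$ and the $\chi_j$-transitions; each is an isomorphism of $\Z_2^n$-domains where defined, and the cocycle conditions follow from the corresponding cocycle conditions on $\{\psi_i\}$, $\{\chi_j\}$, and $\{t_i\}$. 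The main technical obstacle here is bookkeeping at the sheaf level: one must invoke the fundamental isomorphism $C^\infty(|U|)[[\zx]] \,\widehat{\otimes}_\R\, C^\infty(|V|)[[\eta]] \simeq C^\infty(|U|\times|V|)[[\zx,\eta]]$ to identify the pulled-back algebras on overlaps, and then appeal to the Chart Theorem to conclude that the resulting morphisms on overlaps indeed arise from $\Z_2^n$-manifold morphisms with the expected coordinate descriptions.
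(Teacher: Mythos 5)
Your proposal is correct and follows essentially the same route as the paper's own proof: establish the open cover $|E|=\bigcup_{ij}|W_{ij}|$ via the local trivialisations, exhibit each $\phi_{ij}$ as a composition of the trivialisation with the product chart $\psi_i\times\chi_j$, read off the cocycle condition from the composed form of $\phi_{i'j'}\circ\phi_{ij}^{-1}$, and confirm compatibility with $\pi$ by noting the coordinate changes take the triangular form $x^{A'}=x^{A'}(x)$, $y^{B'}=y^{B'}(x,y)$. You supply somewhat more detail than the paper (the pointwise cover argument and the explicit appeal to the fundamental isomorphism and Chart Theorem), but the underlying argument is the same.
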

\begin{proof}
We have an open cover
 $$|E| = \bigcup_{ij \in \mathcal{I} \times \mathcal{J}} |W_{ij}| = \bigcup_{ij \in \mathcal{I} \times \mathcal{J}} |t_i|^{-1} (|U_i| \times |V_j|)\,, $$
as locally $|E|$ is diffeomorphic to the product $|U_i|\times |V_j|$. We now examine the gluing functions.\par
 Neglecting the required restrictions, we have
 $$\phi_{i'j' ij}:=\phi_{i'j'}\circ \phi_{ij}^{-1} = \psi_{i'}\times \chi_{j'} \circ (t_{i'}\circ t_i^{-1})\circ \psi_i^{-1}\times \chi_j^{-1}\,.$$
 It is clear that by construction we have the cocycle condition $\phi_{\alpha\beta}\circ \phi_{\beta \delta} \circ \phi_{\delta \alpha} = \id$, where $\alpha, \beta, \delta \in \mathcal{I}\times \mathcal{J}$ (again we neglect the restrictions). \par
 The final thing to show is that this choice of coordinates respects the bundle structure. As the trivialisation morphisms respect the projection, employing coordinates $(x^{A'}, y^{B'})$ on $\mathcal{U}_{i'}^{p|\mathbf{q}}\times \mathcal{V}_{j'}^{r|\mathfrak{s}}$ and $(x^{A}, y^{B})$ on $\mathcal{U}_{i}^{p|\mathbf{q}}\times \mathcal{V}_{j}^{r|\mathfrak{s}}$ coordinate changes are of the form (using standard abuses of notation)
 $$x^{A'} = x^{A'}(x), \quad y^{B'} =  y^{B'}(x,y)\,.$$
\end{proof}

\section{Principal bundles in the category of $\Z_2^n$-manifolds}\label{sec:PrinBun}
\subsection{$\Z_2^n$-Lie groups and actions}
We take the natural definition of a $\Z_2^n$-Lie group (see \cite{Bruce:2020a,Bruce:2020b}) following the notion of a Lie supergroup (see for example \cite[Chapter 7.1]{Varadarajan:2004}).
\begin{definition}
A \emph{$\Z_2^n$-Lie group} $G$ is a group object in the category of $\Z_2^n$-manifolds.
\end{definition}
 For the notion of a group object in a category, the reader can consult Mac~Lane \cite[page 75]{MacLane1998}.  It is clear from the definition that the reduced manifold $|G|$ is a Lie group with all the structure maps being the reduced maps.  Equivalently, via ``categorical nonsense'', we can define a $\Z_2^n$-Lie group in the following way.
\begin{definition}
A \emph{$\Z_2^n$-Lie group} is a representable group-valued functor
\begin{align*}
& G :  ~ \Z_2^n\catname{Man}^{\mathrm{op}} \longrightarrow \catname{Grp}\\
& S   \mapsto G(S) :=  \Hom_{\Z_2^n\catname{Man}}(S,G)
\end{align*}
That is, every $G(S)$ is a set-theoretical group, i.e.,
\begin{enumerate}
\item for all $\mathrm{g}, \mathrm{h}$ and $\mathrm{k} \in G(S)$, $(\mathrm{g}\cdot \mathrm{h}) \cdot \mathrm{k} =\mathrm{g}\cdot(\mathrm{h}\cdot \mathrm{k})$;
\item there exists an element $\mathrm{e} \in G(S)$ such that $\mathrm{e}\cdot \mathrm{g} =  \mathrm{g} \cdot \mathrm{e}$, for all $\mathrm{g} \in G(S)$; and
\item for every  $\mathrm{g} \in G(S)$ there exists a unique element  $\mathrm{g}^{-1} \in G(S)$, such that $\mathrm{g} \cdot \mathrm{g}^{-1} =\mathrm{g}^{-1} \cdot \mathrm{g}  = \mathrm{e}$.
\end{enumerate}
Any $\psi \in \Hom_{\Z_2^n\catname{Man}}(S' , S)$ induces a group homomorphism
\begin{align*}
G(\psi) := \psi^G :& ~G(S) \longrightarrow G(S')\\
&\mathrm{g} \mapsto \mathrm{g} \circ \psi
\end{align*}
A \emph{homomorphism of $\Z_2^n$-Lie groups} is a $\Z_2^n$-manifold morphism $\phi: G \rightarrow G'$ such that the induced map
\begin{align*}
\phi_S :& ~ G(S) \longrightarrow G'(S) \\
& \mathrm{g} \mapsto \phi \circ \mathrm{g}
\end{align*}
is a group homomorphism for all $S \in \Z_2^n\catname{Man}$.
\end{definition}
\no The resulting category of $\Z_2^n$-Lie groups we will denote as $\Z_2^n\catname{Grp}$.
\begin{remark}
We can consider generalised $\Z_2^n$-Lie groups as group-valued functors that are not necessarily representable. However, we will not encounter such functors in this paper.
\end{remark}
\begin{example}
The $\Z_2^n$-manifold $\R^{0|\mathbf{0}}$ is a trivial  $\Z_2^n$-Lie group. Specifically, $\R^{0|\mathbf{0}}$ is the terminal object in $\Z_2^n\catname{Man}$ and so there is only one element $!_S \in\R^{0|\mathbf{0}}(S)$ for any and all $S\in \Z_2^n\catname{Man}$.
\end{example}
\begin{remark}
It can easily be seen that  $\R^{0|\mathbf{0}}$ is the zero object in $\Z_2^n\catname{Grp}$.
\end{remark}
\begin{example}
Consider the functor $\textrm{Mat}(r|\textbf{s})(-) : \Z_2^n\catname{Man}^{\textrm{op}}\rightarrow \catname{Grp}$ given by
$$\textrm{Mat}(r|\textbf{s})(S):= \textrm{Mat}(r|\textbf{s}, \cO_S(|S|))\,.$$
Here, an element $X \in \textrm{Mat}(r|\textbf{s}, \cO_S(|S|))$ is a $r|\textbf{s}\times r|\textbf{s}$ block matrix with entries from $\cO_S(|S|)$. See \cite{Bruce:2020b} for more details. For any $\psi : S' \rightarrow S$ (dually $\psi^* : \cO_S(|S|) \rightarrow \cO_{S'}(|S'|)$) we have $\textrm{Mat}(r|\mathbf{s})(\psi)(\mathrm{g}):= \bar \psi^*\mathrm{g}\,,$ where $\bar \psi^*$ is $\psi^*$ acting on $\mathrm{g}$ component-wise. Then  $\textrm{Mat}(r|\textbf{s})$ is representable and so a $\Z_2^n$-Lie group under standard matrix addition.
\end{example}
\begin{example}
The general linear $\Z_2^n$-group of degree $r|\mathbf{s}$ is defined as the functor (see \cite{Bruce:2020b} for further details)
$$\textrm{GL}(r|\mathbf{s})(S) :=  \textrm{GL}(r|\mathbf{s}, \cO_S(|S|)) =  \large \{\mathrm{g}\in \textrm{Mat}(r|\mathbf{s}, \cO_S(|S|)_{\mathbf{0}} ~~|~~   \mathrm{g}~ \textnormal{is invertiable})  \large \}\,,$$
and for any $\psi : S' \rightarrow S$ (dually $\psi^* : \cO_S(|S|) \rightarrow \cO_{S'}(|S'|)$)
$$\textrm{GL}(r|\mathbf{s})(\psi)(\mathrm{g}):= \bar \psi^*\mathrm{g}\,,$$
where $\bar \psi^*$ is $\psi^*$ acting on $\mathrm{g}$  component-wise. The group product is just matrix multiplication of matrices with entries in a $\Z_2^n$-graded algebra. It was shown in \cite{Bruce:2020b} that the general linear $\Z_2^n$-group is representable.
\end{example}
\begin{example}[$\Z_2^2$-supersymmetry \cite{Bruce:2019}]\label{exp:SUSY}
Consider $\R^{1|1,1,1}$ equipped with global coordinates $(t,z, \theta^1, \theta^2)$, of degree $(0,0), (1,1), (0,1)$ and $(1,0)$, respectively.  We have a $\Z_2^2$-Lie group structure given in terms of the functor of points as
$$(t_S, z_S, \theta^1_S, \theta^2_S)\cdot(t'_S, z'_S, \theta'^{1}_S, \theta'^{2}_S):= (t_S + t'_S + \theta^1_S \theta'^1_S + \theta^2_S \theta'^2_S, z_S + z'_S +\theta^1_S \theta'^2_S - \theta^2_S \theta'^1_S, \theta^1_S + \theta'^1_S, \theta^2_S + \theta'^2_S)\,.  $$
This is, of course, no different from the ``symbolic'' way of calculating on supermanifolds as used in the physics literature (see \cite[Chapter 4.5]{Varadarajan:2004} for a discussion).
\end{example}
\begin{definition}
A \emph{right action} of a $\Z_2^n$-group $G$ on a $\Z_2^n$-manifold $M$ is a natural transformation
$$\mathrm{a}_- : M(-) \times G(-)\longrightarrow M(-) $$
that satisfies the required conditions to be an action, i.e., writing $\mathrm{a}_S(\mathrm{m} , \mathrm{g})=  \mathrm{m} \triangleleft \mathrm{g}$,
\begin{enumerate}
\item $\mathrm{m} \triangleleft \mathrm{e} = \mathrm{m}$, and \\
\item $(\mathrm{m} \triangleleft \mathrm{g})\triangleleft\mathrm{h} =  \mathrm{m}\triangleleft(\mathrm{g}\cdot \mathrm{h})$,
\end{enumerate}
for all $\mathrm{m} \in M(S)$ and $\mathrm{g}, \mathrm{h} \in G(S)$.
A right action is said to be \emph{free} if for all $S \in \Z_2^n\catname{Man}$, $\mathrm{a}_S$ is a free action, i.e., if $\mathrm{m}\triangleleft \mathrm{g} = \mathrm{m}\triangleleft\mathrm{h}$ then $\mathrm{g} = \mathrm{h}$.  A right action is said to be \emph{trivial} if for all $S \in \Z_2^n\catname{Man}$, $\mathrm{a}_S$ is a trivial action, i.e., if $\mathrm{m}\triangleleft \mathrm{g} = \mathrm{m}$ for all $\mathrm{g} \in G(S)$.
\end{definition}
\begin{example}
Let $G$ be a $\Z_2^n$-Lie group, then the right multiplication $\mathrm{a}_S(\mathrm{g}, \mathrm{h}) = \mathrm{g}\triangleleft \mathrm{h} := \mathrm{g}\cdot \mathrm{h}$, with $\mathrm{g}, \mathrm{h} \in G(S)$, is a free action.
\end{example}
\begin{example}
Let $G$ be a $\Z_2^n$-Lie group, then the  adjoint action defined as $\mathrm{g} \triangleleft \mathrm{h} := \mathrm{h}^{-1}\cdot \mathrm{g}\cdot \mathrm{h}$. One can easily check that we have the right action. Specifically
\begin{enumerate}
\item $\mathrm{g} \triangleleft \mathrm{e} =  \mathrm{e} \cdot \mathrm{g} \cdot \mathrm {e} = \mathrm{g}$,
\item $(\mathrm{g}\triangleleft \mathrm{h})\triangleleft \mathrm{k} = \mathrm{k}^{-1}\cdot \mathrm{h}^{-1} \cdot \mathrm{g} \cdot \mathrm{h}\cdot \mathrm{k} = \mathrm{g}\triangleleft \mathrm{h}\cdot \mathrm{k}$,
\end{enumerate}
with $\mathrm{g}, \mathrm{h}$ and $\mathrm{k} \in G(S)$.
\end{example}
\begin{remark}
The notion of a left action  $G \times M \rightarrow M$ is clear and will be written as $\mathrm{g}\triangleright\mathrm{m}$, using S-points.
\end{remark}
\noindent \textbf{Comment:} Note that via Yoneda's lemma a right (or left) action corresponds to a morphism of $\Z_2^n$-manifolds $\mathrm{a} : M \times G \rightarrow M$ with extra properties that are easiest to describe via the functor of points.
\begin{proposition}
Let $G$ be a $\Z_2^n$-group that acts on $\Z_2^n$-manifold $M$ from the right. Then there is a canonically induced smooth right action of $|G|$ on $|M|$
\end{proposition}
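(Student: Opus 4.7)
The plan is to exploit the functoriality of the reduction $M \mapsto |M|$ together with its compatibility with finite Cartesian products, that is, $|M \times G| \cong |M| \times |G|$, which is built into the construction of the product $\Z_2^n$-manifold recalled earlier. Via Yoneda's lemma, the natural transformation $\mathrm{a}_-$ corresponds to a unique morphism of $\Z_2^n$-manifolds $\mathrm{a} : M \times G \to M$, whose underlying continuous map is smooth because morphisms of $\Z_2^n$-manifolds have smooth reduced parts. Composing with the canonical identification $|M \times G| \cong |M| \times |G|$ produces a smooth map
$$|\mathrm{a}| : |M| \times |G| \longrightarrow |M|,$$
which is the candidate for the induced action.

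Next I would verify the action axioms. Using Yoneda together with the isomorphism $(M \times N)(S) \cong M(S) \times N(S)$ already noted, the two $S$-point identities (i) and (ii) in the definition are equivalent to the commutativity in $\Z_2^n\catname{Man}$ of the associativity diagram $\mathrm{a} \circ (\mathrm{a} \times \id_G) = \mathrm{a} \circ (\id_M \times m_G)$, where $m_G : G \times G \to G$ is group multiplication, and the unit identity $\mathrm{a} \circ (\id_M \times e_G) = \id_M$, where $e_G : \R^{0|\mathbf{0}} \to G$ is the unit element morphism and we have used the identification $M \times \R^{0|\mathbf{0}} \cong M$. Applying the reduction functor to these diagrams preserves composition, identities, and products, so $|\mathrm{a}|$ satisfies the analogous identities with $|m_G|$ and $|e_G|$ in place of $m_G$ and $e_G$. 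Since $|G|$ is a Lie group with multiplication $|m_G|$ and unit $|e_G|$, this is exactly the statement that $|\mathrm{a}|$ is a smooth right action of $|G|$ on $|M|$.

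The only delicate point, and not really a substantive obstacle, is translating the axioms, which are formulated at the level of $S$-points for all $S \in \Z_2^n\catname{Man}$, into commutative diagrams in $\Z_2^n\catname{Man}$ itself, so that the reduction functor can be applied. This is a standard bookkeeping argument via Yoneda together with the product isomorphism above, and it yields the canonicity claim: the induced action depends only on $\mathrm{a}$ and not on any auxiliary choices.
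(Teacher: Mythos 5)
Your proposal is correct. It differs from the paper's argument only in mechanism: the paper obtains the underlying map $|\mathrm{a}|$ from the morphism $\mathrm{a}: M\times G\to M$ and then verifies the action axioms by evaluating the natural transformation at the single object $S=\R^{0|\mathbf{0}}$, using the set-theoretic identifications $|M|\simeq M(\R^{0|\mathbf{0}})$ and $|G|\simeq G(\R^{0|\mathbf{0}})$ to conclude that $|\mathrm{a}|=\mathrm{a}_{\R^{0|\mathbf{0}}}$, so the $S$-point axioms at this particular $S$ are literally the classical action axioms. You instead translate the $S$-point identities into commutative diagrams in $\Z_2^n\catname{Man}$ via Yoneda and the product isomorphism, and then push those diagrams through the reduction functor, which preserves composition, identities, and finite products. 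Both routes rest on the same functoriality principle and are equally valid; the paper's evaluation at $\R^{0|\mathbf{0}}$ is marginally more economical since it sidesteps the diagram bookkeeping you flag as the delicate point, while your version makes explicit that the reduction $M\mapsto|M|$ is a product-preserving functor, which is a reusable observation (it is in effect what the paper's Proposition~\ref{prop:RedPrinBun} also relies on).
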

\begin{proof}
As an action corresponds to a morphism of $\Z_2^n$-manifolds it induces a morphism of smooth manifolds  $|\mathrm{a}| :  |M| \times |G| \rightarrow |M|$. We need to argue that we have an action.  Using $S =  \R^{0|\mathbf{0}} := (\star , \R)$ and making the set theoretically identifications $|M| \simeq M(\R^{0|\mathbf{0}})$ and $|G| \simeq G(\R^{0|\mathbf{0}})$, we observe that $|\mathrm{a}|= \mathrm{a}_{\R^{0|\mathbf{0}}}$. As the right action is defined in a functorial way, it is clear that $|\mathrm{a}|$  is a right action.
\end{proof}
\subsection{Principal $\Z_2^n$-bundles}
The definition of a principal bundle in the category of $\Z_2^n$-manifolds is intuitively clear following classical ideas.  The subtlety is in defining the properties of the action, which we do using the functor of points.
\begin{definition}\label{def:PrinBun}
A \emph{principal $\Z_2^n$-bundle} $P = P(M,G) = (P, M, G, \pi, \mathrm{a})$ consists of the following:
\begin{enumerate}
\item A surjective submersion $\pi : P \rightarrow M$;
\item \label{def:Prin II} An open cover $\{|U_i| \}_{i\in \mathcal{I}}$ of $|M|$ and diffeomorphisms $t_i : \pi^{-1}(U_i) \stackrel{\sim}{\rightarrow} U_i \times G$, such that the following diagram is commutative
\begin{center}
\leavevmode
\begin{xy}
(0,15)*+{\pi^{-1}(U_i)}="a"; (30,15)*+{U_i \times G}="b";%
(30,0)*+{U_i}="c";%
{\ar "a";"b"}?*!/_3mm/{t_i };%
{\ar "a";"c"}?*!/^3mm/{\pi};{\ar "b";"c"}?*!/_5mm/{\textrm{prj}_1};%
\end{xy}
\end{center}
 where $G$ is a $\Z_2^n$-Lie group, $U_i = (|U_i|,\cO_M|_{|U_i|})$ and $\pi^{-1}(U_i) = (|\pi|^{-1}|U_i|, \cO_P|_{|\pi|^{-1}|U_i|})$; and
\item A free right action  $\mathrm{a}: P \times G \rightarrow P$ that satisfies
$$t_{i, S}(\mathrm{p} \triangleleft \mathrm{g})  =  (\pi_S(\mathrm{p}), \mathrm{h}\cdot \mathrm{g}),$$
with $\mathrm{p}\in P(S)$ and $\mathrm{g}, \mathrm{h} \in G(S)$.
\end{enumerate}
\end{definition}
The fibre at an S-point $m \in U_i(S)$ is defined as
$$t_{i,S} : \pi^{-1}_S(m) \stackrel{\sim}{\rightarrow} m \times G(S) \cong G(S)\,.$$
\noindent \textbf{Comments:}  The principal action $\mathrm{a} :  P \times G \rightarrow P$
\begin{enumerate}
\item is trivial on $M$,
\item preserves the S-fibres in the sense that if $\pi_S(\mathrm{p}) =  \mathrm{m} \in M(S)$, then $\pi_S(\mathrm{p}\triangleleft \mathrm{g}) =  \mathrm{m}$ for all $\mathrm{p}\in P(S)$ and all $S \in \Z_2^n\catname{Man}$.
\end{enumerate}
\begin{definition}\label{def:TrivP}
A \emph{trivial principal $\Z_2^n$-bundle}  is a principal $\Z_2^n$-bundle of the form $ \pi : M \times G \rightarrow M $ and the action is defined via S-points as $(\mathrm{m}, \mathrm{h})\triangleleft \mathrm{g} :=  (\mathrm{m}, \mathrm{h}\cdot \mathrm{g})$, with $\mathrm{m}\in M(S)$ and $\mathrm{g}, \mathrm{h} \in G(S)$.
\end{definition}
\begin{example}
Any $\Z_2^n$-Lie group $G$ is a principal  $\Z_2^n$-bundle over a single point, with the right action being the right multiplication.
\end{example}
\begin{example}
Principal superbundles are precisely principal $\Z_2$-bundles (see \cite{Bartocci:1991, Carmeli:2018,Stavracou:1998} ).  Similarly, classical principal bundles are examples.
\end{example}
\begin{example}
Consider a principal $\mathrm{GL}(r| \mathbf{s})$-bundle $P(M, \mathrm{GL}(r| \mathbf{s}))$. Now suppose we take a trivialising open cover $\{|U_i| \}_{i \in \mathcal I}$ that is sufficiently refined such that we can employ local coordinates on $M$. That is, we have an atlas $\{ (U_i, \bar \psi_i)\}_{i \in \mathcal I}$ of $M$, where $ \bar \psi_i :  U_i \stackrel{\sim}{\rightarrow} \mathcal{U}_i^{p|\mathbf{q}} = (\mathcal{U}^p_i, C^\infty_{\mathcal{U}^p_i}[[\zx]])$, with $\mathcal{U}^p_i \subset \R^p$ is open. With these assumptions, we have diffeomorphisms
$$\bar t_i := \bar \psi_i \circ t_i : \pi^{-1}(U_i) \longrightarrow \mathcal{U}_i^p \times \mathrm{GL}(r| \mathbf{s})\,.$$
Thus, we can employ local coordinates of the form $(x^A, g_M^{\,\, N})$ (of all the required degrees). Locally we can represent S-points as pullbacks of these coordinates and we write (neglecting the restriction) $\mathrm{p}^*(x^A, g_M^{\,\, N}) = (x_S^A, (g_S)_M^{\,\, N})$, which are then collections of sections of $\cO_S(|S|)$ (using the ``chart theorem'', see \cite{Covolo:2016}). The action is then, locally, of the form
$$(x_S^A, (g_S)_M^{\,\,\,  N})\triangleleft (h_S)_N^{\,\, L} =  (x_S^A, (g_S)_M^{\,\,\,  N}\, (h_S)_N^{\,\, L})\,.$$
The action, locally,  is no more than standard matrix multiplication of matrices with entries in $\cO_S(|S|)$.
\end{example}
\begin{remark}
The previous example is somewhat generic as most, but not all, of the interesting Lie groups and Lie supergroups found in geometry are matrix Lie (super)groups. While $\Z_2^n$-Lie groups are an almost completely unexplored topic, we expect the matrix case to be prominent.
\end{remark}
\begin{proposition}\label{prop:RedPrinBun}
Let $P(M,G)$ be a principal $\Z_2^n$-bundle.  The reduced manifold $|P|$ is a principal $|G|$-bundle.
\end{proposition}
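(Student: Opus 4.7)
The plan is to apply the reduction functor $(-)\mapsto |(-)|$ from $\Z_2^n\catname{Man}$ to the category of smooth manifolds piece-by-piece to the data of Definition \ref{def:PrinBun}. I will repeatedly use that this functor preserves Cartesian products (so in particular $|P\times G| = |P|\times |G|$), sends diffeomorphisms to diffeomorphisms, and, by Lemma \ref{lem:SurSub}, sends surjective submersions to surjective submersions.

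Applied to the surjective submersion $\pi : P \to M$, this yields a smooth surjective submersion $|\pi| : |P| \to |M|$. The cover $\{|U_i|\}_{i\in\mathcal{I}}$ of $|M|$ is already an open cover in the classical sense, and each trivialisation $t_i : \pi^{-1}(U_i) \stackrel{\sim}{\to} U_i \times G$ reduces to a diffeomorphism $|t_i| : |\pi|^{-1}(|U_i|) \stackrel{\sim}{\to} |U_i| \times |G|$; functoriality then makes the classical trivialisation triangle commute. By the proposition immediately preceding Definition \ref{def:PrinBun}, the right action $\mathrm{a}$ reduces to a smooth right action $|\mathrm{a}| : |P|\times|G| \to |P|$ of $|G|$ on $|P|$.

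The main point requiring care is verifying freeness and equivariance in the classical set-theoretic sense. For both, I would specialise the $\Z_2^n$-level hypotheses to the terminal $\Z_2^n$-point $S = \R^{0|\mathbf{0}}$, using the canonical identifications $|P|\simeq P(\R^{0|\mathbf{0}})$, $|G|\simeq G(\R^{0|\mathbf{0}})$ and $(P\times G)(\R^{0|\mathbf{0}})\cong P(\R^{0|\mathbf{0}})\times G(\R^{0|\mathbf{0}})$, under which $|\mathrm{a}| = \mathrm{a}_{\R^{0|\mathbf{0}}}$ and $|t_i| = (t_i)_{\R^{0|\mathbf{0}}}$. The freeness of $\mathrm{a}_S$ for \emph{all} $S$ then directly yields freeness of $|\mathrm{a}|$, and the compatibility identity $t_{i,S}(\mathrm{p}\triangleleft\mathrm{g}) = (\pi_S(\mathrm{p}), \mathrm{h}\cdot\mathrm{g})$ at $S=\R^{0|\mathbf{0}}$ is precisely the classical equivariance of $|t_i|$. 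No genuine obstacle arises beyond this unpacking; assembling the reduced data, $(|P|, |M|, |G|, |\pi|, |\mathrm{a}|)$ together with $\{|t_i|\}_{i\in\mathcal{I}}$ satisfies the classical definition of a principal $|G|$-bundle.
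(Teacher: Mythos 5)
Your proposal is correct and follows essentially the same route as the paper's proof: reduce the submersion via Lemma \ref{lem:SurSub}, reduce the trivialisations to get the classical fibre-bundle diagrams, and then obtain freeness and equivariance of the reduced action by specialising to $S=\R^{0|\mathbf{0}}$ under the identifications $|P|\simeq P(\R^{0|\mathbf{0}})$, $|G|\simeq G(\R^{0|\mathbf{0}})$ and $|\mathrm{a}|=\mathrm{a}_{\R^{0|\mathbf{0}}}$. The only cosmetic difference is that you invoke the earlier proposition on reduced actions where the paper reproves that step inline.
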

\begin{proof}
From the definitions, we know that $|P|$ is a smooth manifold and that underlying the surjective submersion of the $\Z_2^n$-manifolds we have a surjective submersion of smooth manifolds $|\pi|: |P| \rightarrow |M|$ (see Lemma \ref{lem:SurSub}). Moreover,  the trivialisations have underlying them trivialisations of the reduced manifolds, i.e., we have a family of diffeomorphisms $|t_i| : \, |\pi|^{-1}(|U_i|) \stackrel{\sim}{\rightarrow} |U_i| \times |G|$, here $\{ |U_i|\}_{i \in \mathcal I}$ is an open cover of $|M|$ that trivialises $P$. These maps satisfy the required diagrams to define a fibre bundle in the category of smooth manifolds with typical fibre being the Lie group $|G|$.\par
The only thing remaining is to show that the principal action on $P$ induces a principal action on $|P|$. As the action is defined as a morphism of $\Z_2^n$-manifolds (with extra properties) we know that there is an underlying smooth map $|\mathrm{a}| : |P| \times |G| \rightarrow |P|$. Using $S =  \R^{0|\mathbf{0}} := (\star , \R)$ and making the set theoretically identifications $|P| \simeq  P(\R^{0|\mathbf{0}})$ etc., we make the further identification $|\mathrm{a}|= \mathrm{a}_{\R^{0|\mathbf{0}}}$. As the principal action is defined in a functorial way, it is clear that $|\mathrm{a}|$  is a free right action. Similarly, the functorial nature of the constructions show that $t_{i,\R^{0|\mathbf{0}}}(p\triangleleft g) = (|\pi|(p), h \cdot g)$, with $p \in |P|$, $h,g\in |G|$.  Thus, we have an underlying principal $|G|$-bundle.
\end{proof}
Coordinate charts on the product $M \times G$ allow us to employ local coordinates on $P(M,G)$. As a specific case of Proposition \ref{prop:FibAtlas} we have the following.
\begin{proposition}\label{prop:Atlas}
Let $P(M,G)$ be a principal $G$-bundle and let $\{ (U_i, \psi_i)\}_{i \in \mathcal{I}}$ and $\{ (V_j, \chi_j)\}_{j \in \mathcal{J}}$ be atlases of $M$ and $G$, respectively. Furthermore, assume that $\{ |U_i|\}_{i \in \mathcal I}$ is a trivialising open cover of $|M|$. With the definitions above, $\{  (W_{ij}, \phi_{ij})\}_{ij \in \mathcal{I} \times \mathcal{J}}$ is an atlas on $P$ compatible with the bundle structure.
\end{proposition}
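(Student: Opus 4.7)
The plan is to recognise this statement as an almost immediate corollary of Proposition \ref{prop:FibAtlas}. Indeed, unpacking Definition \ref{def:PrinBun}, a principal $\Z_2^n$-bundle $P(M,G)$ carries, by items (1) and (2) of that definition, exactly the data of a $\Z_2^n$-fibre bundle $(P,M,\pi,G)$ in the sense of Definition \ref{def:FibBun}, with typical fibre the underlying $\Z_2^n$-manifold of the structure group $G$. The hypotheses of Proposition \ref{prop:Atlas} match those of Proposition \ref{prop:FibAtlas} verbatim: we are given an atlas $\{(U_i,\psi_i)\}_{i\in\mathcal{I}}$ of $M$ whose underlying open cover $\{|U_i|\}_{i\in\mathcal{I}}$ trivialises the bundle, and an atlas $\{(V_j,\chi_j)\}_{j\in\mathcal{J}}$ of $G$.

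The key steps I would carry out are therefore as follows. First, I would spell out the identification between the data of Definition \ref{def:PrinBun} and that of Definition \ref{def:FibBun}, so that the trivialising diffeomorphisms $t_i:\pi^{-1}(U_i)\stackrel{\sim}{\to} U_i\times G$ from item (2) of Definition \ref{def:PrinBun} are precisely the local trivialisations required in Definition \ref{def:FibBun}. Second, I would invoke Proposition \ref{prop:FibAtlas} with $F=G$ to conclude that the charts $W_{ij} := (|t_i|^{-1}(|U_i|\times|V_j|),\, \cO_P|_{|W_{ij}|})$ together with the morphisms $\phi_{ij} = (\psi_i\times\chi_j)\circ t_i|_{W_{ij}}$ form an atlas of $P$ whose transition morphisms $\phi_{i'j'ij} = \psi_{i'}\times\chi_{j'}\circ(t_{i'}\circ t_i^{-1})\circ\psi_i^{-1}\times\chi_j^{-1}$ satisfy the cocycle condition and are of the triangular form $x^{A'}=x^{A'}(x)$, $y^{B'}=y^{B'}(x,y)$ in the local coordinates, which is the content of ``compatibility with the bundle structure''.

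The only point that requires a remark, rather than additional proof, is that Proposition \ref{prop:FibAtlas} produces an atlas compatible with the fibre bundle structure; the additional datum of a principal $\Z_2^n$-bundle over a fibre bundle, namely the free right $G$-action $\mathrm{a}:P\times G\to P$ satisfying $t_{i,S}(\mathrm{p}\triangleleft\mathrm{g}) = (\pi_S(\mathrm{p}),\mathrm{h}\cdot\mathrm{g})$, plays no role in building the coordinate atlas itself. Since the $t_i$ are unchanged and the $\phi_{ij}$ are constructed from the $t_i$ together with fixed atlases of $M$ and $G$, the resulting atlas is automatically compatible with the principal bundle structure; in particular, in the coordinates supplied by $\phi_{ij}$, the trivialisation $t_i$ reads as the identity, and the principal action restricts to right multiplication in the fibre coordinates $y^B$.

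I do not anticipate any genuine obstacle: the only subtlety that had to be confronted, namely the proper construction of products and of the sheaf $\cO_{M\times G}$ via the nuclear Fr\'echet fundamental isomorphism, has already been dealt with in the proof of Proposition \ref{prop:FibAtlas}. Consequently, the proof reduces to one sentence citing that proposition, optionally followed by the remark above explaining why the principal action imposes no further compatibility requirement on the atlas.
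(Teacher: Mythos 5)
Your proposal is correct and matches the paper exactly: the paper introduces Proposition \ref{prop:Atlas} with the sentence ``As a specific case of Proposition \ref{prop:FibAtlas} we have the following'' and offers no further proof, which is precisely your reduction with $F=G$. Your additional remark that the principal action imposes no extra condition on the atlas is a reasonable (and correct) elaboration of what the paper leaves implicit.
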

Let $P(M,G)$ be a principal $\Z_2^n$-bundle. We have a relation $\mathrm{p} \sim \mathrm{p}'$ on $P(S)$ given by $\exists ~ \mathrm{g} \in G(S)$ such that $\mathrm{p} = \mathrm{p}'\triangleleft \mathrm{g}$. It is easy to see that we have a relation in this way.
\begin{lemma}
Let $\psi : S' \rightarrow S$ be a morphism of $\Z_2^n$-manifolds. If $\mathrm{p}$ and $\mathrm{p}' \in P(S)$ are related, then $\psi^P(\mathrm{p})$ and $\psi^P(\mathrm{p}') \in P(S')$ are related.
\end{lemma}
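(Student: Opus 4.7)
The plan is to exploit naturality of the action. Recall that the right action $\mathrm{a}_-:M(-)\times G(-)\to M(-)$ (and hence its restriction to $P$) was defined as a natural transformation, so for every morphism $\psi:S'\to S$ there is a commutative square
\[
\begin{array}{ccc}
P(S)\times G(S) & \xrightarrow{\;\mathrm{a}_S\;} & P(S) \\
\psi^P\times\psi^G\;\downarrow & & \downarrow\;\psi^P \\
P(S')\times G(S') & \xrightarrow{\;\mathrm{a}_{S'}\;} & P(S')
\end{array}
\]
where $\psi^P$ and $\psi^G$ denote post-composition with $\psi$, i.e.\ $\psi^P(\mathrm{q})=\mathrm{q}\circ\psi$ and $\psi^G(\mathrm{g})=\mathrm{g}\circ\psi$. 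This is the only structural fact I need.

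Now suppose $\mathrm{p}\sim\mathrm{p}'$ in $P(S)$; by definition there exists $\mathrm{g}\in G(S)$ with $\mathrm{p}=\mathrm{p}'\triangleleft\mathrm{g}=\mathrm{a}_S(\mathrm{p}',\mathrm{g})$. Applying $\psi^P$ and using the commutativity of the square yields
\[
\psi^P(\mathrm{p})=\psi^P\bigl(\mathrm{a}_S(\mathrm{p}',\mathrm{g})\bigr)=\mathrm{a}_{S'}\bigl(\psi^P(\mathrm{p}'),\psi^G(\mathrm{g})\bigr)=\psi^P(\mathrm{p}')\triangleleft\psi^G(\mathrm{g}).
\]
Since $\psi^G(\mathrm{g})\in G(S')$, this exhibits $\psi^P(\mathrm{p})$ and $\psi^P(\mathrm{p}')$ as related, which is exactly what was to be shown.

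There is essentially no obstacle: once one writes down what $\psi^P$ means and invokes functoriality of the action, the statement is a one-line computation. The only minor care needed is to note that the naturality square applies to the ambient action on $M$ and restricts to $P$ (viewed as a sub-$\Z_2^n$-manifold via the bundle structure), which is immediate because the principal action is an honest morphism $\mathrm{a}:P\times G\to P$ and Yoneda converts it into the natural transformation used above. Thus the proof reduces to a single diagram chase and requires no coordinate computations or atlas manipulations.
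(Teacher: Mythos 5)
Your proof is correct and follows the same route as the paper: both arguments apply the naturality (functoriality) of the principal action to the relation $\mathrm{p}=\mathrm{p}'\triangleleft\mathrm{g}$ and conclude $\psi^P(\mathrm{p})=\psi^P(\mathrm{p}')\triangleleft\psi^G(\mathrm{g})$ in one line. Your version is slightly more careful in distinguishing $\psi^G$ from $\psi^P$ and in displaying the naturality square explicitly, but the content is identical.
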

\begin{proof}
This follows from the functorial property of the definition of the actions, i.e.,
 $$\psi^P(\mathrm{p}) =  \psi^P(\mathrm{p}'\triangleleft\mathrm{g}) = \psi^P(\mathrm{p}')\triangleleft \psi^P(\mathrm{g})\,.$$
\end{proof}
\begin{definition}
The \emph{orbit space} of $P(M,G)$ is defined as the functor
\begin{align*}
& P\slash G : ~ \Z_2^n\catname{Man}^{\textrm{op}} \longrightarrow \catname{Set}\\
& S \longmapsto (P\slash G)(S) := P(S)\slash \sim
\end{align*}
and for  morphism $\psi :  S' \rightarrow S$ we set $\psi^{P\slash G}(\mathrm{p}) =  \mathrm{p} \circ \psi$.
\end{definition}
Consider the trivial principal $\Z_2^n$-bundle $M \times G$. Then, for any pair $(\mathrm{m}, \mathrm{h}) \in M(S) \times G(S)$ the action is $(\mathrm{m}, \mathrm{h})\triangleleft \mathrm{g} =  (\mathrm{m}, \mathrm{h}\cdot \mathrm{g})$. Thus, the equivalence relation is only on $G(S)$ and  so
$$\big((M \times G) \slash G  \big)(S) = M(S) \times \mathrm{e} \cong M(S)\,,$$
as every pair $(\mathrm{m}, \mathrm{h}) \sim (\mathrm{m} , \mathrm{e})$, by just setting $\mathrm{g} =  \mathrm{h}^{-1}$. Thus, as expected, the orbit space of a trivial $\Z_2^n$-principle bundle is representable and is naturally isomorphic to $M$ under the image of the Yoneda embedding. This local result implies the global result which we state as the following proposition.
\begin{proposition}\label{prop:OrbitSpace}
Let $P(M,G)$ be a principle $\Z_2^n$-bundle. Then the associated orbit space $P\slash G$ is representable and  `the' representing $\Z_2^n$-manifold is $M$.
\end{proposition}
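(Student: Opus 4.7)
The plan is to construct a natural transformation $\bar\pi : P\slash G \to M(-)$ from the projection $\pi$ and show that it is a natural isomorphism; Yoneda's lemma will then simultaneously establish representability and identify the representing $\Z_2^n$-manifold with $M$. My first step would be to note that the comment following Definition \ref{def:PrinBun} asserts that the principal action preserves $S$-fibres, so $\pi_S(\mathrm{p} \triangleleft \mathrm{g}) = \pi_S(\mathrm{p})$ for all $\mathrm{p} \in P(S)$, $\mathrm{g} \in G(S)$. Consequently $\pi_-$ descends to a well-defined natural transformation $\bar\pi_- : P\slash G \to M(-)$, and it remains only to check that each component $\bar\pi_S$ is a bijection.

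To establish bijectivity, I would invoke the Schwarz--Voronov embedding to reduce to testing on $\Z_2^n$-points $S = (\star, \Lambda)$. On such an $S$, the covering identity $M(\Lambda) = \bigcup_{i \in \mathcal{I}} U_i(\Lambda)$ recalled in the functor-of-points subsection guarantees that every $S$-point of $M$ factors through some trivialising open $U_i$. Surjectivity of $\bar\pi_S$ then follows by lifting $\mathrm{m} \in U_i(\Lambda)$ to $\mathrm{p} := t_i^{-1} \circ (\mathrm{m}, \mathrm{e}) \in P(\Lambda)$, where $\mathrm{e} \in G(\Lambda)$ is the identity. For injectivity, if $\mathrm{p}, \mathrm{p}' \in P(\Lambda)$ have common projection $\mathrm{m} \in U_i(\Lambda)$, then because $|S| = \star$ their underlying topological images lie in $|\pi|^{-1}(|\mathrm{m}|(\star)) \subseteq |\pi|^{-1}(|U_i|)$, so both factor through $\pi^{-1}(U_i)$. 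Writing $t_{i,S}(\mathrm{p}) = (\mathrm{m}, \mathrm{h})$ and $t_{i,S}(\mathrm{p}') = (\mathrm{m}, \mathrm{h}')$, the trivial-bundle computation carried out in the paragraph just before the proposition furnishes $\mathrm{g} := \mathrm{h}'^{-1}\cdot \mathrm{h} \in G(\Lambda)$ with $\mathrm{p}' \triangleleft \mathrm{g} = \mathrm{p}$, and freeness of the action makes this $\mathrm{g}$ unique; hence $\mathrm{p} \sim \mathrm{p}'$.

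The main obstacle is justifying this reduction to $\Z_2^n$-points: for a generic test $\Z_2^n$-manifold $S$, an $S$-point of $M$ need not factor through any single trivialising open, so the local lift above would not glue together in any obvious direct manner. It is exactly the Schwarz--Voronov embedding, whereby a $\Z_2^n$-manifold is determined by its values on $\Z_2^n\catname{Pts}$, that legitimises reducing the global question to the point-like case handled above. With $\bar\pi_S$ established as a bijection for every $\Z_2^n$-point $S$, naturality in $S$ is automatic from functoriality of $\pi_-$, and Yoneda's lemma lets me conclude that $P\slash G$ is representable and naturally isomorphic to the functor of points of $M$.
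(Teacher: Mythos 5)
Your proposal is correct and follows essentially the same route as the paper's proof: both reduce to $\Lambda$-points via the Schwarz--Voronov embedding, use the covering $P(\Lambda) \simeq \bigcup_i \big(U_i(\Lambda)\times G(\Lambda)\big)$ to see that the equivalence relation only affects the $G(\Lambda)$-factor, and conclude by Yoneda's lemma. You merely spell out explicitly (via the surjectivity/injectivity of $\bar\pi_\Lambda$) what the paper compresses into the sentence ``the equivalence relation is on the second factor of pairs.''
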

\begin{proof}
It will be convenient to use $\Lambda$-points. First note that $\{|\pi|^{-1}(|U_i|) \}_{i \in \mathcal{I}}$ is an open cover of $|P|$. Then, using the local trivialisations and Lemma 3.13 and Lemma 3.14 of \cite{Bruce:2020a} we observe that
$$P(\Lambda) \simeq \bigcup_{i \in \mathcal I} ~ \pi^{-1}(U_i)(\Lambda) \simeq \bigcup_{i \in \mathcal I} ~\big( U_i(\Lambda)\times G(\Lambda)\big)\,.$$
Then the equivalence relation is on the second factor of pairs and so
$$(P\slash G)(\Lambda) \simeq \bigcup_{i \in \mathcal I} ~ U_i(\Lambda) \simeq M(\Lambda)\,.$$
Thus, via the Yoneda lemma, we have $(P\slash G) \simeq M$ as $\Z_2^n$-manifolds.
\end{proof}
\noindent \textbf{Aside.} One can ask if the definition of a principal $\Z_2^n$-bundle can be defined a little more generally as a fibre bundle with an arbitrary fibre that has a free and transitive action of $G$ that acts trivially on the base $\Z_2^n$-manifold. To address this question, it is sufficient to examine the trivial bundle $P = M \times F$.  Consider the natural transformation $\Phi_-$ defined component-wise as
\begin{align*}
\Phi_S ~ : M(S)\times F(S)\times G(S)& \longrightarrow M(S)\times F(S)\times F(S)\\
 (m, f, g)& \longmapsto (m, f , f \triangleleft g )
\end{align*}
Observe that for any $S \in \Z_2^n\catname{Man}$, the map $\Phi_S$ is a bijection: injectivity follows from the fact that the map acts as the identity on the first two factors and that the action is a free action, and surjectivity follows as the action is a transitive action. Thus, $G(S) = F(S)$ as sets. Then using the Yoneda lemma we conclude that the $\Z_2^n$-manifolds $G$ and $F$ are diffeomorphic. This is, of course, analogous to the classical case.

\subsection{Morphisms of principal $\Z_2^n$-bundles}
The notion of a morphism of principal $\Z_2^n$-bundles follows from the concept of a morphism of a classical principal bundle with the appropriate modifications.
\begin{definition}
A \emph{morphism of principal $\Z_2^n$-bundles} $P(M,G) \rightarrow P'(M', G')$ is a triple $(\Phi,\phi, \psi)$, where both $\Phi : P \rightarrow P'$ and $\phi : M \rightarrow M'$ are morphisms of $\Z_2^n$-manifolds, and $\psi:  G \rightarrow G'$ is a homomorphism of $\Z_2^n$-Lie groups, such that:
\begin{enumerate}
\item The pair $(\Phi, \phi)$ is a bundle map, i.e., the following diagram is commutative
\begin{center}
\leavevmode
\begin{xy}
(0,15)*+{P}="a"; (20,15)*+{P'}="b";%
(0,0)*+{M}="c"; (20,0)*+{M^\prime}="d";%
{\ar "a";"b"}?*!/_3mm/{\Phi };%
{\ar "a";"c"}?*!/^3mm/{\pi};{\ar "b";"d"}?*!/_3mm/{\pi^\prime};%
{\ar "c";"d"} ?*!/^3mm/{\phi};%
\end{xy}
\end{center}
In terms of S-points, we have $\pi'_S \circ \Phi_S =  \phi_S \circ \pi_S$ for all $S\in \Z_2^n\catname{Man}$.
\item The morphism $\Phi$ is compatible with the action in the sense that $\Phi_S(\mathrm{p}\triangleleft \mathrm{g}) = \Phi_S(\mathrm{p})\triangleleft \psi(\mathrm{g})$.
\end{enumerate}
\emph{Isomorphisms of principal $\Z_2^n$-bundles} are triples $(\Phi, \phi, \psi)$ consisting on isomorphisms in the relevant categories.
\end{definition}
As a matter of formality, we have several categories of interest here:
\begin{center}
\renewcommand{\arraystretch}{1.5}
\begin{tabular}{l l}
$\Z_2^n\catname{Prin}$ & The category of principal $\Z_2^n$-bundles \\
$\Z_2^n\catname{Prin}(M)$ & The category of principal $\Z_2^n$-bundles over $M$, with $\phi = \Id_M$\\
$\Z_2^n\catname{Prin}(M,G)$ & The category of principal $G$-bundles over $M$, with $\phi = \Id_M$ and $\psi =  \Id_G$.
\end{tabular}
\end{center}
\begin{proposition}\label{prop:AllInvTriv}
All endomorphisms of the trivial principal $G$-bundle $M\times G  \in \Z_2^n\catname{Prin}(M,G)$ (so $\phi = \Id_M$ and $\psi =  \Id_G$) are automorphisms, i.e., they are always invertible.
\end{proposition}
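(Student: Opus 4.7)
The plan is to exploit the functor of points together with Yoneda's lemma to reduce the statement to a matter of group theory in each $G(S)$, and then to write down the inverse explicitly.

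First I would unpack what an endomorphism $\Phi \in \Z_2^n\catname{Prin}(M,G)(M\times G)$ looks like at the level of $S$-points. Because $\phi=\Id_M$, condition (i) of the morphism definition gives $\mathrm{pr}_1\circ\Phi_S=\mathrm{pr}_1$, so for every $S\in\Z_2^n\catname{Man}$ and every $(\mathrm{m},\mathrm{h})\in (M\times G)(S)\cong M(S)\times G(S)$ one has $\Phi_S(\mathrm{m},\mathrm{h})=(\mathrm{m},f_S(\mathrm{m},\mathrm{h}))$ for some $f_S:M(S)\times G(S)\to G(S)$, natural in $S$. Condition (ii) together with $\psi=\Id_G$ and the action formula in Definition \ref{def:TrivP} yields
\begin{equation*}
(\mathrm{m},f_S(\mathrm{m},\mathrm{h}\cdot\mathrm{g}))=\Phi_S((\mathrm{m},\mathrm{h})\triangleleft\mathrm{g})=\Phi_S(\mathrm{m},\mathrm{h})\triangleleft\mathrm{g}=(\mathrm{m},f_S(\mathrm{m},\mathrm{h})\cdot\mathrm{g}).
\end{equation*}
Setting $\mathrm{h}=\mathrm{e}$ and renaming $\mathrm{g}\to\mathrm{h}$ gives $f_S(\mathrm{m},\mathrm{h})=\alpha_S(\mathrm{m})\cdot\mathrm{h}$, where $\alpha_S(\mathrm{m}):=f_S(\mathrm{m},\mathrm{e})$.

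Next I would observe that the assignment $S\mapsto\alpha_S$ is a natural transformation $M(-)\to G(-)$: naturality in $S$ is inherited from naturality of $\Phi_-$, and indeed for any $\zc:S'\to S$ one has $\alpha_{S'}(\zc^M(\mathrm{m}))=\zc^G(\alpha_S(\mathrm{m}))$. By Yoneda's lemma this natural transformation is represented by a unique $\Z_2^n$-manifold morphism $\alpha:M\to G$. Thus $\Phi$ is the morphism whose functor-of-points description is $(\mathrm{m},\mathrm{h})\mapsto(\mathrm{m},\alpha_S(\mathrm{m})\cdot\mathrm{h})$.

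To produce the inverse I would form $\alpha^{-1}:=\inv\circ\alpha:M\to G$, where $\inv:G\to G$ is the inversion morphism of the $\Z_2^n$-Lie group $G$; in functor-of-points terms, $(\alpha^{-1})_S(\mathrm{m})=(\alpha_S(\mathrm{m}))^{-1}$. Using the categorical product structure and the group product $\mu:G\times G\to G$, define $\Psi:M\times G\to M\times G$ by $\Psi_S(\mathrm{m},\mathrm{h})=(\mathrm{m},(\alpha_S(\mathrm{m}))^{-1}\cdot\mathrm{h})$; this is a bona fide $\Z_2^n$-manifold morphism because it is assembled from the morphisms $\alpha$, $\inv$, $\mu$ and the projections. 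One then checks component-wise using the group axioms in each $G(S)$ that $\Phi_S\circ\Psi_S=\Id_{(M\times G)(S)}$ and $\Psi_S\circ\Phi_S=\Id_{(M\times G)(S)}$, and that $\Psi$ is itself an endomorphism in $\Z_2^n\catname{Prin}(M,G)$ (equivariance being immediate from associativity of $\cdot$).

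The only place where one might worry is checking that $\Psi$ is indeed a $\Z_2^n$-manifold morphism rather than merely a natural transformation of set-valued functors; but since $\alpha$ is a morphism and $\inv$, $\mu$ are part of the $\Z_2^n$-Lie group structure of $G$, the composition lands in $\Z_2^n\catname{Man}$, and a second appeal to Yoneda promotes the componentwise inverse to a genuine inverse morphism. This completes the argument.
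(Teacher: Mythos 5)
Your proposal is correct and follows essentially the same route as the paper's proof: reduce to $S$-points, show $\Phi_S(\mathrm{m},\mathrm{h})=(\mathrm{m},\alpha_S(\mathrm{m})\cdot\mathrm{h})$ for a natural transformation $\alpha_-:M(-)\to G(-)$ represented via Yoneda by a morphism $M\to G$, and then write down the explicit componentwise inverse $(\mathrm{m},\mathrm{h})\mapsto(\mathrm{m},(\alpha_S(\mathrm{m}))^{-1}\cdot\mathrm{h})$. If anything, you are more careful than the paper, which merely asserts the normal form of $\Phi_S$ where you derive it from the bundle-map and equivariance conditions, and which leaves implicit the check that the inverse is assembled from the morphisms $\alpha$, $\inv$, and the group multiplication.
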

\begin{proof}
Let $\Phi$ be an endomorphism of a trivial principal bundle $M \times G$ (from the definition the other maps are identity maps).   Then for any $S\in \Z_2^n\catname{Man}$, $\Phi_S(\mathrm{m}, \mathrm{h}) = (\mathrm{m}, \lambda_S(\mathrm{m})\cdot\mathrm{h})$, where $\lambda_- : M(-) \rightarrow G(-)$ is a natural transformation between the respective sets of S-points.  Using Yoneda's lemma this corresponds to a $\Z_2^n$-manifold morphism $\lambda : M \rightarrow G$. Note that $\Phi_S$ is a bijective function from $M(S)\times G(S)$ to itself as we can define
$$\Phi^{-1}_S(\mathrm{m}, \mathrm{h}) = (\mathrm{m}, (\lambda(\mathrm{m}) )^{-1}\cdot \mathrm{h} ).$$
Then via Yoneda's lemma, we observe that $\Phi^{-1}$ exists as a morphism of $\Z_2^n$-manifolds.
\end{proof}
The previous results on trivial principal G-bundles imply the non-trivial result.
\begin{proposition}\label{prop:AllIso}
All morphisms in the category $\Z_2^n\catname{Prin}(M,G)$ are isomorphisms.
\end{proposition}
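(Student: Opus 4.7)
The plan is to reduce to the trivial case already handled in Proposition \ref{prop:AllInvTriv} by working locally, and then to glue the resulting local inverses into a global inverse using Yoneda's lemma. Let $\Phi$ be a morphism in $\Z_2^n\catname{Prin}(M,G)$ from $P(M,G)$ to $P'(M,G)$, so by assumption the accompanying maps on the base and on the structure group are $\Id_M$ and $\Id_G$, and $\Phi$ intertwines the two principal actions.

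First I would choose an open cover $\{|U_i|\}_{i\in\mathcal{I}}$ of $|M|$ that simultaneously trivialises $P$ and $P'$; this exists because any common refinement of two trivialising covers is again trivialising. Denote the corresponding trivialisations by $t_i : \pi^{-1}(U_i) \stackrel{\sim}{\to} U_i \times G$ and $t'_i : (\pi')^{-1}(U_i) \stackrel{\sim}{\to} U_i \times G$. Since $(\Phi,\Id_M,\Id_G)$ is a bundle map, $\Phi$ restricts to morphisms $\Phi_i : \pi^{-1}(U_i) \to (\pi')^{-1}(U_i)$, and the conjugated morphism
\[
\bar{\Phi}_i := t'_i \circ \Phi_i \circ t_i^{-1} : U_i \times G \longrightarrow U_i \times G
\]
is an endomorphism of the trivial principal $G$-bundle over $U_i$ inside $\Z_2^n\catname{Prin}(U_i,G)$, because both trivialisations are equivariant (item \ref{def:Prin II} of Definition \ref{def:PrinBun}) and $\Phi$ is equivariant by hypothesis. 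By Proposition \ref{prop:AllInvTriv}, each $\bar{\Phi}_i$ is an automorphism of $U_i \times G$, and hence each $\Phi_i$ is an isomorphism of $\Z_2^n$-manifolds with inverse $\Phi_i^{-1} = t_i^{-1} \circ \bar{\Phi}_i^{-1} \circ t'_i$.

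Next I would glue the family $\{\Phi_i^{-1}\}_{i\in\mathcal{I}}$ into a global inverse. On the double overlap $\pi^{-1}(U_{ij}) = \pi^{-1}(U_i) \cap \pi^{-1}(U_j)$ the restrictions $\Phi_i^{-1}|_{\pi'^{-1}(U_{ij})}$ and $\Phi_j^{-1}|_{\pi'^{-1}(U_{ij})}$ are both two-sided inverses of $\Phi|_{\pi^{-1}(U_{ij})}$; by uniqueness of inverses they coincide, so the compatibility condition needed to glue morphisms of $\Z_2^n$-manifolds (which can be checked at the level of $S$-points via Yoneda, using the local description $M(\Lambda)\simeq \bigcup_i U_i(\Lambda)$ of Section \ref{sec:Z2nGeom}) is automatic. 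This produces a morphism $\Psi : P' \to P$ of $\Z_2^n$-manifolds with $\Psi \circ \Phi = \Id_P$ and $\Phi \circ \Psi = \Id_{P'}$.

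Finally, I would verify that $(\Psi,\Id_M,\Id_G)$ is a morphism in $\Z_2^n\catname{Prin}(M,G)$, i.e.\ that $\Psi$ intertwines the principal actions and covers $\Id_M$. Both properties are local and follow directly from the analogous properties of $\Phi$ together with the fact that each $\Phi_i^{-1}$ is a morphism in $\Z_2^n\catname{Prin}(U_i,G)$ (by Proposition \ref{prop:AllInvTriv}). The only delicate point in the whole argument is ensuring that the inverses obtained from the different trivialisations really glue; this is exactly where uniqueness of inverses saves us, and where the functor-of-points perspective is useful to bypass direct manipulations with structure sheaves of formal power series.
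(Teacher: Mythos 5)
Your proposal is correct and follows essentially the same route as the paper: pass to a common trivialising cover, conjugate the restricted morphism by the trivialisations to land in the setting of Proposition \ref{prop:AllInvTriv}, and then assemble the local inverses into a global one (the paper does this assembly at the level of $\Lambda$-points and invokes Yoneda, whereas you glue the local inverses directly via uniqueness of two-sided inverses, but this is only a cosmetic difference). The equivariance and base-preservation checks you add at the end are consistent with, and slightly more explicit than, what the paper records.
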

\begin{proof}
Let $\Phi : P \rightarrow P'$ be a morphism of principal G-bundles over the same base. Note that using $\Lambda$-points $P(\Lambda) \simeq P'(\Lambda) \simeq \bigcup_{i \in \mathcal I} \big (U_i(\Lambda) \times G(\Lambda) \big)$, where we have chosen open cover $\{ |U|_i\}_{i \in \mathcal I}$ of $|M|$ that trivialises both  $P$ and $P'$. Such a cover can always be found by refining the covers given in the definition of a principal $\Z_2^n$-bundle. We then consider the following commutative diagram.
\begin{center}
\leavevmode
\begin{xy}
(0,20)*+{\pi_\Lambda^{-1}(U_i(\Lambda))}="a"; (45,20)*+{\pi'^{-1}_\Lambda(U_i(\Lambda))}="b";%
(0,0)*+{U_i(\Lambda)\times G(\Lambda)}="c"; (45,0)*+{U_i(\Lambda)\times G(\Lambda)}="d";%
{\ar "a";"b"}?*!/_3mm/{\Phi_\Lambda |_{\pi_\Lambda^{-1}(U_i(\Lambda))} };%
{\ar "a";"c"}?*!/^3mm/{t_{i, \Lambda}};{\ar "b";"d"}?*!/_3mm/{t'_{i, \Lambda}};%
{\ar "c";"d"} ?*!/^3mm/{\Psi_{i, \Lambda}};%
\end{xy}
\end{center}
Thus, $\Psi_{i, \Lambda} := t'_{i, \Lambda} \circ \Phi_\Lambda |_{\pi_\Lambda^{-1}(U_i(\Lambda))} \circ t_{i, \Lambda}^{-1}$, which is invertible via Proposition \ref{prop:AllInvTriv}. Hence,  $ \Phi_\Lambda |_{\pi_\Lambda^{-1}(U_i(\Lambda))}$ is itself invertible. As $P(\Lambda) \simeq \bigcup_{i\in \mathcal{I}} \pi_\Lambda^{-1}(U_i(\Lambda))$ we deduce that $\Phi_\Lambda$ is invertible. Then, via Yoneda's lemma, we conclude that $\Phi$ is a diffeomorphism in the category of $\Z_2^n$-manifolds. Thus, the proposition is established.
 \end{proof}
\begin{remark}
In complete analogy with the classical case, $\Z_2^n\catname{Prin}(M,G)$ ($n\geq 1$) is a groupoid, but now not small, i.e., the objects are not sets.
\end{remark}
\begin{remark}
For a given principal $G$-bundle, the automorphisms in $\Z_2^n\catname{Prin}(M,G)$ form what is known as the \emph{gauge group} of $P$, and is denoted $\textrm{Gau}(P)$. Note that this is a group and not a $\Z_2^n$-graded group as all the maps carry zero $\Z_2^n$-degree. To describe the ``full gauge group'', one needs to enrich $\textrm{Gau}(P)$ using the internal homs, in this way, we get gauge transformations that carry non-zero  $\Z_2^n$-degree. We will not spell out the construction in this paper.
\end{remark}
We will now turn our attention to the characterisation of trivial principal $G$-bundles in terms of the existence of a global section. By a  \emph{section} of a principal $G$-bundle $\zp:P\to M$, we mean a morphism of $\Z_2^n$-manifolds $s: M \rightarrow P$ such that $\pi \circ s =  \Id_M$. That is, we only consider degree zero sections and do not consider enriched sections in this paper, i.e., sections that carry non-zero $\Z_2^n$-degree.
Using $\Lambda$-points we observe that a section corresponds to a map
$$s_\Lambda : \bigcup_{i \in \mathcal I} U_i(\Lambda) \longrightarrow \bigcup_{i \in \mathcal I} \big (U_i(\Lambda)\times G(\Lambda)\big)\,, $$
given by $U_{i}(\Lambda) \ni m \mapsto (m, g)\in U_i(\Lambda)\times G(\Lambda)$. That is, to every $\Lambda$-point of $M$ we assign an element of the group $G(\Lambda)$.
\begin{proposition} \label{prop:GlobSecTriv}
A principal $G$-bundle $P(M,G)$ is trivialisable if and only if it admits a global section.
\end{proposition}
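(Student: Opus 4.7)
The plan is to mimic the classical argument very closely, using the functor of points to phrase everything, and then invoke Proposition \ref{prop:AllIso} to upgrade a natural bundle morphism into an isomorphism without having to construct an inverse by hand.

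For the easy direction, assume $P(M,G)$ is trivialisable, so we may identify $P$ with $M\times G$. I would define a section $s: M \to M\times G$ by specifying the natural transformation
$$s_S: M(S) \longrightarrow (M\times G)(S)\cong M(S)\times G(S), \qquad \mathrm{m}\longmapsto (\mathrm{m},\mathrm{e}_S),$$
where $\mathrm{e}_S \in G(S)$ is the unit $S$-point (the composition of $!_S$ with $\mathrm{e}: \R^{0|\mathbf{0}}\to G$). Naturality follows since group homomorphisms preserve identities, so by Yoneda this lifts to a morphism of $\Z_2^n$-manifolds, and $\pi_S\circ s_S = \mathrm{Id}_{M(S)}$ for every $S$ by definition of the trivial bundle projection.

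For the substantive direction, suppose a global section $s:M\to P$ exists. The idea is to produce a morphism $\Phi: M\times G \to P$ in $\Z_2^n\catname{Prin}(M,G)$ and then apply Proposition \ref{prop:AllIso}, which guarantees that \emph{any} such morphism is automatically an isomorphism. I would define $\Phi$ via its $S$-points as
$$\Phi_S: M(S)\times G(S)\longrightarrow P(S), \qquad (\mathrm{m},\mathrm{g})\longmapsto s_S(\mathrm{m})\triangleleft\mathrm{g}.$$
Naturality in $S$ is immediate: for $\psi:S'\to S$, functoriality of $s$, of the action, and of $M,G,P$ as presheaves gives $\psi^{P}\circ \Phi_S = \Phi_{S'}\circ (\psi^M\times \psi^G)$. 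Combined with $(M\times G)(S)\cong M(S)\times G(S)$ (the Cartesian product identity recorded in Section~\ref{sec:Z2nGeom}), Yoneda's lemma then produces a unique morphism $\Phi: M\times G \to P$ of $\Z_2^n$-manifolds.

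It remains to check that $\Phi$ is a morphism of principal bundles with $\phi=\mathrm{Id}_M$ and $\psi=\mathrm{Id}_G$. Compatibility with projections uses $\pi\circ s = \mathrm{Id}_M$ and the fact that the principal action preserves fibres: $\pi_S(\Phi_S(\mathrm{m},\mathrm{g})) = \pi_S(s_S(\mathrm{m})\triangleleft\mathrm{g}) = \pi_S(s_S(\mathrm{m})) = \mathrm{m}$. Compatibility with actions is the associativity of the right action:
$$\Phi_S\bigl((\mathrm{m},\mathrm{h})\triangleleft\mathrm{g}\bigr)=\Phi_S(\mathrm{m},\mathrm{h}\cdot\mathrm{g})=s_S(\mathrm{m})\triangleleft(\mathrm{h}\cdot\mathrm{g})=\bigl(s_S(\mathrm{m})\triangleleft\mathrm{h}\bigr)\triangleleft\mathrm{g}=\Phi_S(\mathrm{m},\mathrm{h})\triangleleft\mathrm{g}.$$
Hence $\Phi$ is a morphism in $\Z_2^n\catname{Prin}(M,G)$, and Proposition \ref{prop:AllIso} promotes it to an isomorphism, trivialising $P(M,G)$.

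The only genuinely nontrivial point is the passage from the natural transformation $\Phi_-$ to a bona fide $\Z_2^n$-manifold morphism; this is entirely handled by Yoneda together with the universal property $(M\times G)(S)\cong M(S)\times G(S)$, so no separate patching or coordinate computation is required. Everything else is formal manipulation of the action axioms.
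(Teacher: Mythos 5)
Your proposal is correct and follows essentially the same route as the paper: the easy direction constructs a section of the trivial bundle (you use the unit section, the paper the section attached to an arbitrary $\lambda: M \to G$), and the substantive direction defines $\Phi_S(\mathrm{m},\mathrm{g}) := s_S(\mathrm{m})\triangleleft\mathrm{g}$, verifies it is a morphism in $\Z_2^n\catname{Prin}(M,G)$, and invokes Proposition \ref{prop:AllIso} to conclude it is an isomorphism. The only difference is that you make the Yoneda/naturality bookkeeping explicit, which the paper leaves implicit.
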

\begin{proof}
Consider the trivial  principal $G$-bundle $\pi: M \times G \rightarrow M$  The specification of any morphism of $\Z_2^n$-manifolds $\lambda : M \rightarrow G$ defines a section via $\mathrm{m} \mapsto (\mathrm{m}, \lambda_S(\mathrm{m}))$, where $\lambda_S(\mathrm{m}) := \lambda \circ \mathrm{m}$, for  any $S \in \Z_2^n\catname{Man}$.  Thus, we can construct global sections of trivial principal $G$-bundles. \par
Suppose that we have a section $s : M \rightarrow P$. We then define a morphism of $\Z_2^n$-manifolds $\Phi : M \times G \rightarrow P$ using S-points as $\Phi_S(\mathrm{m}, \mathrm{g}) := s_S(\mathrm{m}) \triangleleft \mathrm{g}$, where $s_S(\mathrm{m}):= s \circ \mathrm{m}$. We now need to argue that this morphism is a morphism of principal $\Z_2^n$-bundles.  First, it is clear that we have a bundle morphism as
 $$\pi_S(\Phi_S(\mathrm{m}, \mathrm{g})) = \pi_S(s_S(\mathrm{m})\triangleleft\mathrm{g}) = \mathrm{m},$$
as required. Similarly, a short calculation shows that this morphism is compatible with the action,
$$\Phi_S((\mathrm{m} , \mathrm{g})\triangleleft\mathrm{h}) =  \Phi_S(\mathrm{m}, \mathrm{g}\cdot \mathrm{h}) = s_S(\mathrm{m}) \triangleleft\mathrm{g}\cdot \mathrm{h}= (s_S(\mathrm{m}) \triangleleft \mathrm{g})\triangleleft \mathrm{h} =  \Phi_S(\mathrm{m}, \mathrm{g})\triangleleft\mathrm{h}\,$$
as required. Then using Proposition \ref{prop:AllIso}, we see that $\Phi$ is an isomorphism and thus we can conclude.
\end{proof}
\begin{proposition}
The category $\Z_2^n\catname{Prin}(M)$ admits finite products.
\end{proposition}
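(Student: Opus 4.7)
The plan is to exhibit a terminal object and binary products in $\Z_2^n\catname{Prin}(M)$ separately, since finite products in a category amount to these two ingredients. For the terminal object, I would propose $(M,\R^{0|\mathbf{0}})$, i.e.\ $M$ regarded as a principal bundle over itself with structure $\Z_2^n$-Lie group the zero object $\R^{0|\mathbf{0}} \in \Z_2^n\catname{Grp}$: the surjective submersion is $\Id_M$, the trivialisations $t_i : U_i \to U_i \times \R^{0|\mathbf{0}} \simeq U_i$ are canonical isomorphisms, and the action is forced to be trivial since $\R^{0|\mathbf{0}}(S)$ is a singleton for every $S$. For any object $Q(M,H) \in \Z_2^n\catname{Prin}(M)$, the pair $(\pi_Q, !_H)$, with $!_H : H \to \R^{0|\mathbf{0}}$ the unique $\Z_2^n$-Lie group homomorphism, is the unique morphism to $(M,\R^{0|\mathbf{0}})$; both the bundle square and the action-compatibility condition hold vacuously.

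For binary products, given $P(M,G)$ and $P'(M,G')$, I would construct the fibred product $P\times_M P'$ over $M$ equipped with the product $\Z_2^n$-Lie group $G\times G'$. Since $G\times G'$ is a $\Z_2^n$-Lie group (products exist in $\Z_2^n\catname{Man}$, with componentwise multiplication on $S$-points), the task reduces to producing the total space. I would first take a common trivialising open cover $\{|U_i|\}_{i\in\mathcal{I}}$ of $|M|$ for both $P$ and $P'$ (by refinement) with trivialisations $t_i$, $t'_i$, and then build $P\times_M P'$ by gluing the local models $U_i\times (G\times G')$ along the combined cocycle of $P$ and $P'$, which is tautologically a cocycle. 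The projection $\pi\times_M\pi' : P\times_M P' \to M$ is locally the first projection, hence a surjective submersion, and the diagonal trivialisations $t_i\times_M t'_i$ fit into the diagram of Definition \ref{def:PrinBun}\ref{def:Prin II}. The right $(G\times G')$-action is defined on $S$-points by
\[
(\mathrm{p},\mathrm{p}')\triangleleft(\mathrm{g},\mathrm{g}') := (\mathrm{p}\triangleleft\mathrm{g},\, \mathrm{p}'\triangleleft\mathrm{g}'),
\]
which inherits freeness and fibre-preservation from the two factor actions.

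The universal property I would check via the functor of points. The natural projections $\mathrm{pr}_1, \mathrm{pr}_2$ to $P$ and $P'$, paired with the group projections $G\times G' \to G$ and $G\times G' \to G'$, are morphisms in $\Z_2^n\catname{Prin}(M)$. Given a third object $Q(M,H)$ together with morphisms $(\Phi_1,\psi_1)$ and $(\Phi_2,\psi_2)$ to $P$ and $P'$, I would define the mediating morphism on $S$-points by $\Phi_S(\mathrm{q}) := (\Phi_{1,S}(\mathrm{q}),\Phi_{2,S}(\mathrm{q}))$ and the group homomorphism by $\psi := (\psi_1,\psi_2)$; the identity $\pi_S\circ\Phi_{i,S} = \pi_{Q,S}$ ensures the image lands in the fibred product, action-compatibility holds componentwise, and Yoneda's lemma gives both existence (as a bona fide $\Z_2^n$-manifold morphism) and uniqueness.

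The principal technical obstacle is the construction of $P\times_M P'$ as a $\Z_2^n$-manifold, since fibred products are not developed explicitly in the excerpt. My strategy to overcome this is to perform the gluing by hand over the trivialising cover, using Proposition \ref{prop:Atlas} to provide coordinate charts $U_i\times G\times G'$ and observing that the transition data on overlaps is the pairing of the (already known) transition data for $P$ and for $P'$, so the cocycle condition is automatic. Once the total space is in place, all remaining verifications reduce to routine $S$-point manipulations mirroring the classical argument that products in the category of principal bundles over a fixed base are given by fibred products with product structure groups.
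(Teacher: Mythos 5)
Your proof is correct, and it reaches the same object (the fibred product $P\times_M P'$ with structure group $G\times G'$) as the paper, but by a genuinely different route at the key technical step. The paper defines $\big(P\times_M P'\big)(S)$ directly as the set-theoretic fibred product of $S$-points, gets the universal property for free from Yoneda, and then establishes representability by exhibiting $\big(P\times_M P'\big)(\Lambda)\simeq\bigcup_i\big(U_i(\Lambda)\times G(\Lambda)\times G'(\Lambda)\big)$ and invoking the open-subfunctor criterion of \cite[Theorem 3.34]{Bruce:2020a}. You instead build the total space by hand, gluing the local models $U_i\times(G\times G')$ along the paired cocycle $(\psi_{ij},\psi'_{ij}):U_{ij}\to G\times G'$; this is really an application of Theorem \ref{thm:CocyGlue} (which is the more natural reference here than Proposition \ref{prop:Atlas}), and it buys you a concrete construction without having to appeal to the representability machinery, at the cost of then having to verify the universal property by explicit $S$-point manipulation rather than inheriting it from the set-level fibred product. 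Your treatment is also slightly more complete in one respect: you exhibit the terminal object $(M,\R^{0|\mathbf{0}})$ explicitly, so that ``finite products'' genuinely includes the empty product, whereas the paper only constructs binary products. One small point worth making explicit in your binary-product argument: uniqueness of the mediating morphism requires not only that the total-space map $\Phi$ be determined by the two projections (which follows from the fibred-product description on $S$-points) but also that the group-homomorphism component $\psi$ be determined, which holds because $G\times G'$ is the categorical product in $\Z_2^n\catname{Grp}$.
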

\begin{proof}
Following the classical construction, we define the product via S-points (so as a functor from $\Z_2^n\catname{Man}^{\textrm{op}} \rightarrow \catname{Set}$) as
$$\big(P\times_M P' \big)(S) := \left \{(p,p') \in P(S)\times P'(S) ~~| ~~\pi_S(p) = \pi'_S(p') \right\}\,.$$
This is just the product of $\pi_S : P(S) \rightarrow M(S)$ and  $\pi'_S : P'(S) \rightarrow M(S)$ in the category of sets. Thus, via Yoneda's lemma, provided $P\times_M P'$ exists in $\Z_2^n\catname{Man}$, we have the required universal properties of a product.\par
The representability of $\big(P\times_M P' \big)(-)$ is easiest to observe using $\Lambda$-points. Specifically, by using an open cover of $M$ that trivialises both $P$ and $P'$ we see that
 $$\big(P\times_M P' \big)(\Lambda) \simeq \bigcup_{i \in \mathcal{I}}\big(  U_ i(\Lambda) \times G(\Lambda) \times G'(\Lambda)\big)\,. $$
 Then via \cite[Theorem 3.34]{Bruce:2020a}, we see that the product has the local structure required to be representable, the functor has a cover built from open subfunctors. Thus, $P\times_M P'$ is a $\Z_2^n$-manifold. \par
 The fibre bundle structure is evident and the action of $G$ and $G'$, as defined by S-points, is component-wise in the product.
\end{proof}
\subsection{Transition functions and gluing}
Given an open in $\{ |U_i|\}_{i \in \mathcal I}$ that trivialises a principal $\Z_2^n$-bundle $P(M,G)$, we have a  diffeomorphisms (in the category of $\Z_2^n$-manifolds)
$$t_i : \pi^{-1}(U_i) \stackrel{\sim}{\rightarrow} U_i \times G\,.$$
We then define, as standard, $|U_{ij}| = |U_i|\cap |U_j|$ and  $|U_{ijk}| = |U_i|\cap |U_j| \cap |U_k|$, taken to be non-empty.  We define $U_{ij} := (|U_{ij}|, \cO_M|_{|U_{ij}|})$.  As standard, we have the diffeomorphisms
\begin{equation}\label{eqn:tij}
t_{ij} := t_i|_{|U_{ij}|} ~\circ ~ \big( t_j|_{|U_{ij}|}\big)^{-1} :  U_{ij}\times G \longrightarrow U_{ij} \times G\,.
\end{equation}
We will examine these diffeomorphisms using S-points. Suppose $\mathrm{p} \in \pi_S^{-1}(U_{ij}(S))$ is such that $t_{i,S}(\mathrm{p}) = (\mathrm{x} , \mathrm{h})$ and $t_{j,S}(\mathrm{p}) = (\mathrm{x} , \mathrm{g})$. We write, $t_{ij,S}(\mathrm{x}, \mathrm{g}) = (\mathrm{x} , \mathrm{h}) = (\mathrm{x} , \psi_{ij, S}(\mathrm{x})\cdot\mathrm{g})$, with $\psi_{ij, S}: U_{ij}(S) \rightarrow G(S)$.  The maps $t_{ij,S}$ are invertible as they are constructed from diffeomorphisms. Then, using Yoneda's lemma, we naturally arrive at the following definition.
 \begin{definition}\label{def:TransMorp}
The $\Z_2^n$-manifold morphisms  $\psi_{ij}: U_{ij} \rightarrow G$ constructed from the isomorphisms $t_{ij}$ are called the \emph{transition morphisms} for $P(M,G)$ relative to the chosen trivialisation $\big\{ (|U_i|, t_i) \big\}_{i \in \mathcal I}$.
\end{definition}
 Via the definitions it is clear that $\psi_{ij,S}(\mathrm{x}) =  \psi_{ij}\circ \mathrm{x}$. Then under change of parametrisation $\psi : S' \rightarrow S$ we have $\psi_{ij,S}(\mathrm{x}) \mapsto \psi_{ij}\circ \mathrm{x}\circ \psi =  \psi_{ij, S'}(\mathrm{x}')$ with $\mathrm{x}' := \mathrm{x}\circ \psi$ for any $\mathrm{x} \in U_{ij}(S)$.
 \begin{proposition}\label{prop:TransMaps}
The diffeomorphisms defined in \eqref{eqn:tij}, are isomorphisms of trivial principal $\Z_2^n$-bundles and satisfy the gluing conditions:
\begin{enumerate}
\item $t_{ii} = \Id$,
\item $t_{ij} \circ t_{ji} = \Id$, and
\item $t_{ik}|_{|U_{ijk}|}\circ t_{kl}|_{|U_{ijk}|} \circ t_{ki}|_{|U_{ijk}|}= \Id$,
\end{enumerate}
where it is understood where the identity maps live.
 \end{proposition}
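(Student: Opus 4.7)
The plan is to work entirely at the level of $S$-points (and then appeal to Yoneda), since the transition morphisms $\psi_{ij}$ have already been identified via exactly this mechanism in Definition \ref{def:TransMorp}. The content of the proposition splits into two independent pieces: first, that each $t_{ij}$ is a morphism (in fact an isomorphism) in $\Z_2^n\catname{Prin}(U_{ij},G)$ when $U_{ij}\times G$ is regarded as a trivial principal $G$-bundle on both sides; second, the three cocycle-type identities. I will handle the bundle-morphism part first because the gluing identities then follow almost formally from $t_{ij}=t_i\circ t_j^{-1}$.

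For the principal bundle morphism statement, I will fix an arbitrary $S\in\Z_2^n\catname{Man}$ and compute $t_{ij,S}$ on a pair $(\mathrm{x},\mathrm{g})\in U_{ij}(S)\times G(S)$. By the very construction of $\psi_{ij}$, we have $t_{ij,S}(\mathrm{x},\mathrm{g})=(\mathrm{x},\psi_{ij,S}(\mathrm{x})\cdot \mathrm{g})$, so the projection onto the first factor is preserved, i.e., $\mathrm{pr}_{1,S}\circ t_{ij,S}=\mathrm{pr}_{1,S}$, which is precisely the bundle-map condition over the base $U_{ij}$. Compatibility with the right action of $G$ (with $\psi=\Id_G$) reduces to the identity
\[
t_{ij,S}\bigl((\mathrm{x},\mathrm{g})\triangleleft \mathrm{h}\bigr)=t_{ij,S}(\mathrm{x},\mathrm{g}\cdot \mathrm{h})=(\mathrm{x},\psi_{ij,S}(\mathrm{x})\cdot \mathrm{g}\cdot \mathrm{h})=t_{ij,S}(\mathrm{x},\mathrm{g})\triangleleft \mathrm{h},
\]
which uses only associativity of the group product in $G(S)$. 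Naturality in $S$ is automatic because $t_i$ and $t_j$ are morphisms of $\Z_2^n$-manifolds. At this point Yoneda turns $t_{ij,-}$ back into the original morphism $t_{ij}$, and invertibility is already guaranteed from \eqref{eqn:tij} (being a composition of diffeomorphisms); alternatively it follows directly from Proposition \ref{prop:AllInvTriv}, applied to the trivial bundle $U_{ij}\times G$.

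The gluing identities are then essentially formal. Restricting everything to $S$-points once more: (i) $t_{ii,S}=t_{i,S}\circ t_{i,S}^{-1}=\Id$; (ii) $t_{ij,S}\circ t_{ji,S}=(t_{i,S}\circ t_{j,S}^{-1})\circ(t_{j,S}\circ t_{i,S}^{-1})=\Id$; and (iii) on $|U_{ijk}|$, writing $t_{\alpha\beta}=t_\alpha\circ t_\beta^{-1}$, the triple composition telescopes to the identity. Again, Yoneda promotes each equality of $S$-point maps (natural in $S$) to an honest equality of $\Z_2^n$-manifold morphisms. I do not anticipate any serious technical obstacle here; the only subtlety worth being explicit about is the bookkeeping of restrictions to the various non-empty intersections $|U_{ij}|$ and $|U_{ijk}|$, which is why the convention ``where it is understood where the identity maps live'' in the statement is invoked. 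Everything else is a direct consequence of the definitions and of the equivalence between morphisms and their families of $S$-point maps.
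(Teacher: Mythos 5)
Your proposal is correct and follows essentially the same route as the paper: compute $t_{ij,S}(\mathrm{x},\mathrm{g})=(\mathrm{x},\psi_{ij,S}(\mathrm{x})\cdot\mathrm{g})$ on $S$-points, read off preservation of the projection and equivariance from associativity of the product in $G(S)$, invoke Yoneda, and observe that the gluing identities telescope from $t_{ij}=t_i\circ t_j^{-1}$. The only difference is that you spell out the telescoping and the restriction bookkeeping that the paper dismisses as ``obvious via construction,'' which is harmless extra detail.
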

\begin{proof}
Suppose $\mathrm{p} \in \pi_S^{-1}(U_{ij}(S))$ is such that  $t_{ij,S}(\mathrm{x}, \mathrm{g}) = (\mathrm{x} , \mathrm{h}) = (\mathrm{x} , \psi_{ij, S}(\mathrm{x})\cdot\mathrm{g})$. It is clear that the diffeomorphisms $t_{ij}$  are bundle morphisms (using Yoneda's Lemma).  A quick calculation shows that $t_{ij,S}((\mathrm{x},\mathrm{g})\triangleleft \mathrm{k}) = t_{ij,S}(\mathrm{x},\mathrm{g}\cdot \mathrm{k}) = (\mathrm{x},\psi_{ij, S}(\mathrm{x})\cdot \mathrm{g} \cdot \mathrm{k} ) = t_{ij,S}(\mathrm{x},\mathrm{g})\triangleleft \mathrm{k}$. Thus, we have an isomorphism of trivial principal $\Z_2^n$-bundles. The gluing conditions are obvious via construction.
\end{proof}
\begin{lemma}\label{lem:Cocycle}
From the gluing conditions we have $\psi_{ik,S}(\mathrm{x})\cdot \psi_{kl,S}(\mathrm{x})\cdot \psi_{li,S}(\mathrm{x})=\Id$ for any S-point $\mathrm{x} \in U_{ijk}(S)$ (neglecting the required restrictions). Equivalent, via Yoneda's lemma, the transition morphisms satisfy
\begin{equation}\label{eqn:Cocycle}
 \psi_{ik,}\cdot \psi_{kl}\cdot \psi_{li}=\Id\,.
 \end{equation}
\end{lemma}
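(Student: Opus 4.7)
The plan is to reduce the lemma directly to the third gluing condition of Proposition \ref{prop:TransMaps} by reading off its effect on $S$-points, then invoke Yoneda's lemma to promote the pointwise identity to an identity of morphisms. Since every step is forced by the already-established formula $t_{ij,S}(\mathrm{x},\mathrm{g})=(\mathrm{x},\psi_{ij,S}(\mathrm{x})\cdot\mathrm{g})$, the proof is essentially a bookkeeping exercise; I do not anticipate a serious obstacle.

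First I would fix $S\in\Z_2^n\catname{Man}$ and an $S$-point $\mathrm{x}\in U_{ijk}(S)$ (understood as lying in the appropriate triple intersection after restriction), together with an arbitrary $\mathrm{g}\in G(S)$, so that $(\mathrm{x},\mathrm{g})$ is an $S$-point of the trivial bundle $U_{ijk}\times G$. Applying the defining formula for the transition trivialisations three times yields
\begin{align*}
t_{ik,S}\circ t_{kl,S}\circ t_{li,S}(\mathrm{x},\mathrm{g})
&= t_{ik,S}\circ t_{kl,S}(\mathrm{x},\psi_{li,S}(\mathrm{x})\cdot \mathrm{g})\\
&= t_{ik,S}(\mathrm{x},\psi_{kl,S}(\mathrm{x})\cdot\psi_{li,S}(\mathrm{x})\cdot \mathrm{g})\\
&= (\mathrm{x},\,\psi_{ik,S}(\mathrm{x})\cdot\psi_{kl,S}(\mathrm{x})\cdot\psi_{li,S}(\mathrm{x})\cdot \mathrm{g}).
\end{align*}

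Next I would invoke the cocycle identity $t_{ik}\circ t_{kl}\circ t_{li}=\Id$ from Proposition \ref{prop:TransMaps}(iii), which asserts that the left-hand side of the display equals $(\mathrm{x},\mathrm{g})$. Because $G(S)$ is a group and $\mathrm{g}\in G(S)$ is arbitrary, this forces
\[
\psi_{ik,S}(\mathrm{x})\cdot\psi_{kl,S}(\mathrm{x})\cdot\psi_{li,S}(\mathrm{x})=\mathrm{e}\in G(S),
\]
which is the first claim of the lemma (here $\Id$ is read as the identity $\mathrm{e}$ of $G(S)$, equivalently the constant morphism with value the unit of the $\Z_2^n$-Lie group $G$).

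Finally, I would promote this pointwise statement to a statement about morphisms of $\Z_2^n$-manifolds. Recalling from just before the lemma that $\psi_{ij,S}(\mathrm{x})=\psi_{ij}\circ \mathrm{x}$ and that the functor-of-points assignment is natural in $S$, the equality above holds for all $S\in\Z_2^n\catname{Man}$ and all $\mathrm{x}\in U_{ijk}(S)$. Therefore the two natural transformations $U_{ijk}(-)\to G(-)$ given by $\mathrm{x}\mapsto\psi_{ik,-}(\mathrm{x})\cdot\psi_{kl,-}(\mathrm{x})\cdot\psi_{li,-}(\mathrm{x})$ and $\mathrm{x}\mapsto \mathrm{e}$ coincide. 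By Yoneda's lemma, the corresponding $\Z_2^n$-manifold morphisms agree, i.e.\ $\psi_{ik}\cdot\psi_{kl}\cdot\psi_{li}=\Id$ as morphisms $U_{ijk}\to G$, which is \eqref{eqn:Cocycle}.
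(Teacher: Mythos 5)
Your proof is correct and follows exactly the route the paper intends: the lemma is stated there without proof as an immediate consequence of the gluing conditions in Proposition \ref{prop:TransMaps}, and your computation on $S$-points followed by cancellation of $\mathrm{g}$ and an appeal to Yoneda is precisely the omitted bookkeeping. You have also (sensibly) read the third gluing condition as $t_{ik}\circ t_{kl}\circ t_{li}=\Id$ so that the trivialisations actually chain, which corrects an index typo in the paper's statement.
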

In the other direction, given an appropriate collection of morphisms that satisfy the cocycle condition, we can always find a principle $\Z_2^n$-bundle whose transition functions are given by the said collection of morphisms. In other words, specifying, for a given $M$, a collection $\{(|U_i|, \psi_{ij}) \}$ such that the morphisms satisfy the cocycle, defines, up to isomorphism, a unique principle $\Z_2^n$-bundle.
\begin{theorem}\label{thm:CocyGlue} Let  $M = (|M|, \cO_M)$ be a $\Z_2^n$-manifold together with a fixed open covering $\{|U_i| \}_{i \in \mathcal I}$ and a collection of maps  $\psi_{ij}: U_{ij} \rightarrow G$ that satisfy the cocyle condition \eqref{eqn:Cocycle}. Then there exist a $G$-principal bundle $P(M,G)$ and a trivialistion of it over the open cover $\{|U_i|\}_{i \in \mathcal I}$ of $|M|$ whose transition morphisms are $\psi_{ij}$.
\end{theorem}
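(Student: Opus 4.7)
The plan is to reverse-engineer the construction of transition morphisms from Proposition~\ref{prop:TransMaps} and Lemma~\ref{lem:Cocycle}. I will first build local pieces, then use the hypothesis to assemble an isomorphism on each overlap, verify the cocycle identities, and finally glue in the category of $\Z_2^n$-manifolds. The candidate will be
\[
P \;=\; \bigsqcup_{i\in\mathcal I} (U_i \times G) \Big/ \sim,
\]
where on overlaps we identify $(m,g)\in U_j\times G$ with $(m,\psi_{ij,S}(m)\cdot g)\in U_i\times G$ at the level of $S$-points, and where the projection and right $G$-action come from the obvious structure on each trivial factor $U_i\times G$.

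First, for each pair $(i,j)$ with $|U_{ij}|\neq\emptyset$, I will promote $\psi_{ij}$ to a morphism of trivial principal bundles $\tau_{ij}: U_{ij}\times G \to U_{ij}\times G$, defined through $S$-points by
\[
\tau_{ij,S}(\mathrm{m},\mathrm{g}) \;:=\; (\mathrm{m},\, \psi_{ij,S}(\mathrm{m})\cdot \mathrm{g}),
\]
and invoke Yoneda to obtain a genuine morphism of $\Z_2^n$-manifolds. A direct check as in the proof of Proposition~\ref{prop:TransMaps} shows $\tau_{ij}$ is compatible with the right $G$-multiplication and with the projection to $U_{ij}$, hence is an isomorphism of trivial principal $\Z_2^n$-bundles (with inverse furnished by $\psi_{ij}^{-1}$, which exists by Proposition~\ref{prop:AllInvTriv}). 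The cocycle hypothesis~\eqref{eqn:Cocycle} on the $\psi_{ij}$ translates, again via Yoneda and the associativity of the group law in each $G(S)$, into the gluing identities $\tau_{ii}=\Id$, $\tau_{ij}\circ\tau_{ji}=\Id$, and $\tau_{ik}\circ\tau_{kl}\circ\tau_{li}=\Id$ over triple overlaps.

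Second, I will glue the locally ringed spaces $P_i := U_i\times G$ along the $\tau_{ij}$. The underlying topological space $|P|$ is obtained by the usual classical gluing of $|U_i|\times |G|$ along the homeomorphisms $|\tau_{ij}|$, which is well posed because their reduced maps still satisfy the cocycle conditions; this already realises the classical principal $|G|$-bundle predicted by Proposition~\ref{prop:RedPrinBun}. For the structure sheaf one uses the fact that $\Z_2^n$-manifolds form a full subcategory of locally $\Z_2^n$-ringed spaces closed under gluing along open covers: the sheaves $\cO_{P_i}$ are glued using the pullback isomorphisms $\tau_{ij}^*$ on the overlaps $\pi_i^{-1}(U_{ij})$, and the cocycle identities at the level of morphisms of $\Z_2^n$-manifolds give exactly the sheaf-theoretic gluing cocycle. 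The local model of each piece is $\mathcal U^{p|\mathbf q}\times \mathcal V^{r|\mathbf s}$, so the glued space is locally a $\Z_2^n$-domain and therefore is a $\Z_2^n$-manifold.

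Third, I will equip $P$ with the principal bundle structure. The projections $\mathrm{prj}_1: U_i\times G\to U_i$ agree on overlaps by construction of $\tau_{ij}$ (which fixes the first factor), so they glue to a morphism $\pi : P\to M$ whose restriction to each $\pi^{-1}(U_i)$ is the $t_i:\pi^{-1}(U_i)\xrightarrow{\sim}U_i\times G$ promised by the theorem. Since each $\mathrm{prj}_1$ is a surjective submersion with local adapted-coordinate form as in \cite{Covolo:2021}, so is $\pi$. The right action is built similarly: on each piece $U_i\times G$ we have the canonical right action $(\mathrm{m},\mathrm{h})\triangleleft\mathrm{g}=(\mathrm{m},\mathrm{h}\cdot\mathrm{g})$ in the sense of Definition~\ref{def:TrivP}, and because each $\tau_{ij}$ was shown to commute with the right multiplication on the second factor, the actions agree on overlaps and glue to a global morphism $\mathrm{a}:P\times G\to P$. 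Freeness and the trivialisation identity $t_{i,S}(\mathrm{p}\triangleleft \mathrm{g})=(\pi_S(\mathrm{p}),\mathrm{h}\cdot\mathrm{g})$ required by Definition~\ref{def:PrinBun} follow fibrewise from the trivial model. Finally, unwinding the definition of the glued trivialisations gives $t_i\circ t_j^{-1}=\tau_{ij}$, so the transition morphisms of $P(M,G)$ are exactly the prescribed $\psi_{ij}$.

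The main obstacle is the middle step: ensuring that the category $\Z_2^n\catname{Man}$ really does admit gluing along a cocycle of open-subspace isomorphisms, and that the $S$-point (Yoneda) description of the $\tau_{ij}$ descends faithfully to sheaf morphisms compatible on triple overlaps. Everything else is either a direct transcription of the classical recipe or a functor-of-points verification already modelled on Propositions~\ref{prop:AllInvTriv}--\ref{prop:TransMaps}.
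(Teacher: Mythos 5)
Your proposal is correct and follows essentially the same route as the paper: promote the $\psi_{ij}$ to isomorphisms of trivial principal bundles on overlaps, glue the underlying topological spaces by the classical theorem, define the structure sheaf as compatible families of local sections under the pullbacks $\tau_{ij}^*$, and install the projection and the right action locally via the functor of points. The only cosmetic difference is that the paper carries out the final verification of the action using $\Lambda$-points rather than general $S$-points, and it addresses your flagged ``main obstacle'' exactly as you suggest, by writing down the glued sheaf explicitly so that no abstract gluing lemma for $\Z_2^n\catname{Man}$ is needed.
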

  \begin{proof} Starting form the open cover $\{ |U_i|\}_{i \in \mathcal I}$ and $G$  we want to glue `spaces' $U_i \times G$ to construct a $\Z_2^n$-manifold.  As we have maps of $\Z_2^n$-manifolds $\psi_{ij}= (|\psi_{ij}|, \psi^*_{ij}) : U_{ij} \rightarrow G$ we have underlying maps of smooth manifolds $|\psi_{ij}|: |U_{ij}| \rightarrow |G|$ that satisfy the cocycle condition. In particular, we have diffeomorphims $t_{ij} : U_{ij}\times G \rightarrow U_{ij} \times G$.  We can then use the classical theorem to construct a fibre bundle in the category of smooth manifolds $|\pi| :  |P| \rightarrow |M|$ with local trivialisation $|\pi|^{-1}(|U_i|) \simeq |U_i| \times |G|$. We now need to construct the structure sheaf $\cO_P :  |V| \rightarrow \cO_P(|V|)$, with $|V|\subset |P|$ is open, in order to have a $\Z_2^n$-manifold.  We define $P_i := U_i \times G$  and $|P_{ij}| :=  |P_i| \cap |P_j| \cong |U_{ij}|\times G$. Then we set  for any open $|V| \subseteq |P|$
$$\cO_P(|V|) :=  \left \{\big(s_i \in \cO_{P_i}(|P_i|\cap |V|) \big)_{i \in \mathcal{I}} ~~|~~ t^*_{ij}(s_i|_{|P_{ij}|\cap |V|}) = s_j|_{|P_{ij}|\cap |V|}\, , ~~ i,j \in \mathcal{I}  \right\}\,.$$
Observe that the cocycle condition implies that $\cO_P|_{|\pi|^{-1}(|U_i|)} \simeq \cO_{P_i} \simeq \cO_{U_i \times G}$. We thus have a fibre bundle in the category of $\Z_2^n$-manifolds $P=(|P|, \cO_P)$. Note that by construction
$$\pi^{-1}(U_i) \simeq (|U_i| \times |G|, \cO_{U_i \times G})\,.$$
We now need to construct the principal right action.  Using $\Lambda$-points we have
$$P(\Lambda) \simeq \bigcup_{i \in \mathcal I} \big( U_i(\Lambda) \times G(\Lambda) \big)\,.$$
 We then define the action as $(\mathrm{x}, \mathrm{g})\triangleleft \mathrm{h} = (\mathrm{x}, \mathrm{g}\cdot \mathrm{h})$. This action satisfies the conditions we need.
  \end{proof}
\subsection{Associated bundles}
Consider a $P(M,G)$ be a principal $\Z_2^n$-bundle and a $\Z_2^n$-manifold $F$ equipped with a smooth left action  $\mathrm{b}: G \times F \rightarrow F$.  We take this action of be faithful, i.e., the action $\mathrm{b}_S :  G(S)\times F(S) \rightarrow F(S)$ is faithful for all $S  \in \Z_2^n\catname{Man}$.  That is, for every $\mathrm{g} \neq \mathrm{e} \in G(S)$ there exists $\mathrm{v} \in F(S)$ such that $\mathrm{g}\triangleright \mathrm{v} \neq \mathrm{v}$. We chose a trivialising open cover $\{ |U_i|\}_{i \in \mathcal{I}}$ and transition functions $t_{ij} : U_{ij}\times G \rightarrow U_{ij} \times G$, which are given via S-points as $t_{ij,S}(\mathrm{x} , \psi_{ij,S}(\mathrm{x})\cdot \mathrm{g})$, with $\psi_{ij}(\mathrm{x}):= \psi_{ij}\cdot \mathrm{x}$.  We construct a fibre bundle $E$ over $M$
with fibre $F$ via specifying the transition maps $\tau_{ij}= (|\tau_{ij}|, \tau^*_{ij}): U_{ij} \times F \rightarrow U_{ij} \times F$ defined via S-points as
$$\tau_{ij,S}(\mathrm{x}, \mathrm{v}) :=  (\mathrm{x},  \psi_{ij,S}(\mathrm{x}) \triangleright \mathrm{v})\,.$$
By constriction, these transition maps satisfy the cocycle condition. We then use the classical theorem to construct a fibre bundle in the category of smooth manifolds $|\pi| :  |E| \rightarrow |M|$ with local trivialisation $|\pi|^{-1}(|U_i|) \simeq |U_i| \times |F|$. We now need to construct the structure sheaf $\cO_E :  |V| \rightarrow \cO_E(|V|)$, with $|V|\subset |E|$ is open, in order to have a $\Z_2^n$-manifold.  We define $E_i := U_i \times F$  and $|E_{ij}| :=  |E_i| \cap |E_j| \simeq |U_{ij}|\times F$. Then we set  for any open $|V| \subseteq |E|$
$$\cO_E(|V|) :=  \left \{\big(s_i \in \cO_{E_i}(|E_i|\cap |V|) \big)_{i \in \mathcal{I}} ~~|~~ \tau^*_{ij}(s_i|_{|E_{ij}|\cap |V|}) = s_j|_{|E_{ij}|\cap |V|}\, , ~~ i,j \in \mathcal{I}  \right\}\,.$$
Observe that the cocycle condition implies that $\cO_E|_{|\pi|^{-1}(|U_i|)} \simeq \cO_{E_i}$. We thus have a fibre bundle in the category of $\Z_2^n$-manifolds $E=(|E|, \cO_E)$.
\begin{definition}
Let $P(M,G)$ be a principal $\Z_2^n$-bundle and let $F$ be a $\Z_2^n$-manifold equipped with a faithful smooth left action  $\mathrm{b}: G \times F \rightarrow F$.  Then the $\Z_2^n$-fibre bundle $E$, constructed above, is the \emph{associated fibre $\Z_2^n$-bundle} to $P(M,G)$ with fibre $F$.
\end{definition}

\section{Vector bundles and frame bundles}\label{sec:Fbundles}

\subsection{Vector bundles in the category of $\Z_2^n$-manifolds}
We cannot think of a vector bundle in the setting of $\Z_2^n$-geometry as a $\Z_2^n$-manifold in which we attach a graded vector space to each point. In particular, the fibres cannot be real vector spaces, but linear or Cartesian  $\Z_2^n$-manifolds.  The vector space structure is somewhat secondary in this picture.  Cartesian $\Z_2^n$-manifold were first properly discussed in \cite{Bruce:2020b}, and so direct the reader there for further details and results.
\begin{definition}
A \emph{Cartesian $\Z_2^n$-manifold} of dimension $p|\mathbf{q}$ is a $\Z_2^n$-manifold of the form
$$\R^{p|\mathbf{q}} = \big(\R^p, C^\infty_{\R^p}[[\zx]] \big)\,,$$
where $\mathbf{q} = (q_1, q_2, \cdots , q_{2^n-1})$ and $\zx^\za$ are formal coordinates of all non-zero $\Z_2^n$-degrees such that we have $q_i$ formal variables of $\Z_2^n$-degree $\gamma_i$ ($i >0$). We employ canonical linear coordinates $x^a$ on $\R^p$ and formal coordinates $\zx^\za$, and collectively write $x^A = (x^a, \zx^\za)$.
\end{definition}
\begin{remark}
We also have $\Z_2^n$-graded vector spaces $\mathbf{R}^{p|\mathbf{q}} := \mathbf{R}^p \displaystyle \bigotimes_{j=1}^N \mathbf{R}^{q_j}$  (here each $\mathbf{R}^\bullet := \R^\bullet$ as vector spaces). Note we have the important result
$$\big(\R^{p|\mathbf{q}}  \big)^\vee  \simeq \cO^{\textnormal{lin}}_{ \R^{p|\mathbf{q}}}(\R^p)\,.$$
\end{remark}
\no A very convenient tool to work with vector bundles is the concept of a \emph{homogeneity structure} on a $\z2$-manifold $M$, understood as $\z2$-degree $\mathbf{0}$ vector field $\n$, of degree 0 (cf. \cite{Grabowski:2009,Grabowski:2012}). The pair $(M,\n)$ we call a \emph{homogeneity manifold}. Then, we call a (local) function $f$ on $M$ \emph{$\n$-homogeneous of weight $w\in\R$}, if $\n(f)=w\cdot f$. Obviously, the product $ff'$ of a $w$-homogeneous function $f$ and a $w'$-homogeneous function $f'$ is $(w+w')$-homogeneous. This can be extended to the homogeneity of arbitrary tensor fields with the use of a Lie derivative. A morphism between homogeneity manifolds $(M,\n)$ and $(M',\n')$ is a smooth map $\zf:M\to M'$ which relates the weight vector fields. In other words, the pullbacks of (local) $w$-homogeneous functions are (local) $w$-homogeneous functions.\par
\mn Consider now a Cartesian manifold  $M=\R^{p|\bq}$ with global $\z2$-homogeneous coordinates $(x^a,\zx^\za)$. The obvious linear structure on the $\R$-vector space $\mathbf{R}^{p|\bq}$, dual to the one spanned by these coordinates, is encoded in the \emph{Euler vector field} on $\R^{p|\bq}$,
\be\label{euler}
\n^{p|\bq}=\sum_ax^a\pa_{x^a}+\sum_\za\zx^\za\pa_{\zx^\za},
\ee
which, clearly, has degree 0. Note that we can also view $\R^{p|\bq}$ as a $\z2$-graded $\R$-vector space spanned by $\pa_{x^a},\pa_{\zx^\za}$.

A useful observation is that the degree 0 vector field $\n^{p|\bq}$ is exactly the generator of the multiplication by positive reals in $M=\R^{p|\bq}$: if
$$h_t(x^a,\zx^\za)=(e^tx^a,e^t\zx^\za)$$
for $t\in\R$, then $h_t$ is a one-parameter group of diffeomorphisms of $\R^{p|\bq}$ and
$$\n^{p|\bq}(x^a,\zx^\za)=\frac{\xd}{\xd t}_{\big|_{t=0}}h_t(x^a,\zx^\za).$$
\begin{corollary}
The Euler vector field $\n^{p|\bq}$ is completely determined by the multiplication by positive reals in $M=\R^{p|\bq}$, so it does not depend on the choice of linear coordinates and therefore is preserved by linear diffeomorphisms of the $\z2$-manifold $\R^{p|\bq}$.
\end{corollary}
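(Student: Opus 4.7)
The strategy is to promote the displayed formula $\n^{p|\bq}=\frac{\xd}{\xd t}|_{t=0}h_t$ into an \emph{intrinsic} construction, bypassing any reference to a particular coordinate system. The first step is to observe that, for any fixed $t\in\R$, the assignment $h_t\colon\R^{p|\bq}\to\R^{p|\bq}$ can be described without choosing coordinates: on the algebra of functions it is the unique morphism of $\Z_2^n$-manifolds whose pullback sends every degree $\mathbf{0}$ linear function to $e^t$ times itself, and more generally (since every element of the structure sheaf is a formal power series in linear coordinates) it acts on a homogeneous-of-weight-$w$ function by multiplication by $e^{tw}$. In particular, if $(x^A)$ and $(y^A)$ are two systems of linear coordinates on $\R^{p|\bq}$, then $y^A=M^A_{\,\,B}x^B$ for some $\Z_2^n$-degree-preserving invertible matrix $M$, and
\[
h_t^*(y^A)=M^A_{\,\,B}h_t^*(x^B)=M^A_{\,\,B}e^tx^B=e^ty^A,
\]
so the same flow $h_t$ is produced from either coordinate system. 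Thus $\{h_t\}_{t\in\R}$ is a well-defined one-parameter group of diffeomorphisms depending only on the linear structure of $\R^{p|\bq}$.

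The second step is to pass from $h_t$ to its generator. Differentiating $h_t$ at $t=0$ on the algebra of functions gives a degree $\mathbf{0}$ derivation which, by the displayed formula, coincides with $\n^{p|\bq}$ in any chosen set of linear coordinates. Since $h_t$ itself is coordinate-independent by the previous paragraph, so is this generator; equivalently, the right-hand side of \eqref{euler} is the same vector field when computed in $(x^A)$ or in $(y^A)$.

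For the final claim, let $\zf\colon\R^{p|\bq}\to\R^{p|\bq}$ be a linear diffeomorphism, i.e.\ a $\Z_2^n$-manifold isomorphism whose pullback preserves the $\R$-span of the linear coordinates. Such a $\zf$ commutes with scalar multiplication, so $\zf\circ h_t=h_t\circ\zf$ for every $t\in\R$; this can be checked directly on linear coordinates as in the first paragraph and then extended to all of $\cO_{\R^{p|\bq}}$ by continuity in the $\mathcal{J}$-adic topology. Differentiating at $t=0$ yields $\zf_*\n^{p|\bq}=\n^{p|\bq}$, i.e.\ the Euler vector field is invariant under linear diffeomorphisms.

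The only step requiring a little care is the very first one, namely verifying that the prescription $h_t^*x^A=e^tx^A$ on linear coordinates really does extend to a morphism of $\Z_2^n$-manifolds defined on all of $\R^{p|\bq}$, not only on polynomials. This is where completeness of $\cO_{\R^{p|\bq}}$ in the $\mathcal{J}$-adic topology (cf.\ the short exact sequence \eqref{eqn:SES}) is used: a $\Z_2^n$-graded algebra endomorphism of $C^\infty(\R^p)[[\xi]]$ is determined by its values on generators, and the rescaling $x^A\mapsto e^tx^A$ is manifestly continuous, preserves the grading, and is invertible with inverse $x^A\mapsto e^{-t}x^A$, so it does integrate to a diffeomorphism. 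Once this is in place, the rest of the argument is purely formal.
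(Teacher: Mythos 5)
Your proposal is correct and follows essentially the same route as the paper: the paper derives the corollary directly from its preceding observation that $\n^{p|\bq}$ is the infinitesimal generator of the one-parameter group $h_t$ of multiplications by $e^t$, which is intrinsic to the linear structure; you simply spell out the coordinate-independence of $h_t$ and the commutation of linear diffeomorphisms with $h_t$ in full detail. No gaps; the appeal to the Chart Theorem to extend $h_t^*x^A=e^tx^A$ to the whole structure sheaf is exactly the right justification.
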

\begin{proposition}[\textbf{Euler's Homogeneous Theorem}] Let $f$ be a $\n^{p|\bq}$-homogeneous of weight 1 function on the $\z2$-manifold $\R^{p|\bq}$. Then,
$$f=\sum_a F_a{x^a}+\sum_\za G_\za{\zx^\za},$$
for some $F_a,G_\za\in\R$. In other words, $f$ is a linear function in the standard sense.
\end{proposition}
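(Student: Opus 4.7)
The plan is to expand $f$ as a formal power series in the formal coordinates, observe that $\n^{p|\bq}$ acts as a total-degree counter on monomials, match coefficients, and then invoke the classical Euler theorem fibrewise on $\R^p$.

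First I would write, using multi-index notation, $f(x,\zx)=\sum_{\boldsymbol{\alpha}}\zx^{\boldsymbol{\alpha}}\, f_{\boldsymbol{\alpha}}(x)$ with $f_{\boldsymbol{\alpha}}\in C^\infty(\R^p)$. The key computational observation is that the degree-zero vector field $\sum_\za \zx^\za\pa_{\zx^\za}$ acts on any ordered monomial $\zx^{\zb_1}\cdots\zx^{\zb_k}$ by multiplication by $k$; the sign picked up when moving $\pa_{\zx^\za}$ through a $\zx^{\zb_i}$ is exactly cancelled by the sign incurred when moving $\zx^\za$ back into the $i$-th slot, so the total effect is counting. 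Applying $\n^{p|\bq}$ to the expansion of $f$ and comparing coefficients of each $\zx^{\boldsymbol{\alpha}}$ with the equation $\n^{p|\bq}(f)=f$ therefore gives the scalar PDE
\begin{equation*}
\sum_a x^a\,\pa_{x^a} f_{\boldsymbol{\alpha}}(x)=(1-|\boldsymbol{\alpha}|)\,f_{\boldsymbol{\alpha}}(x)
\end{equation*}
for every multi-index $\boldsymbol{\alpha}$, where $|\boldsymbol{\alpha}|=\sum_i\alpha_i$.

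Next I would solve this classical PDE. Integrating along the flow $x\mapsto e^t x$ of $\sum_a x^a\pa_{x^a}$ on $\R^p$ gives $f_{\boldsymbol{\alpha}}(sx)=s^{1-|\boldsymbol{\alpha}|}f_{\boldsymbol{\alpha}}(x)$ for all $s>0$, so $f_{\boldsymbol{\alpha}}$ is smooth and positively homogeneous of degree $1-|\boldsymbol{\alpha}|$. A standard argument then kills most of the terms: for $|\boldsymbol{\alpha}|\geq 2$ the exponent $1-|\boldsymbol{\alpha}|$ is negative, and continuity at the origin forces $f_{\boldsymbol{\alpha}}=0$; for $|\boldsymbol{\alpha}|=1$ the function is homogeneous of degree $0$ and continuous at $0$, hence a constant $G_\za$; for $\boldsymbol{\alpha}=\mathbf{0}$ a smooth function on $\R^p$ homogeneous of degree $1$ is a homogeneous polynomial of degree $1$, i.e., $\sum_a F_a x^a$ (this is the classical Euler theorem, provable directly by Taylor-expanding $f_{\boldsymbol{0}}$ at the origin and observing that only the linear terms survive).

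Assembling the surviving pieces yields $f=\sum_a F_ax^a+\sum_\za G_\za\zx^\za$, as claimed. The only genuinely non-trivial point is the first one, namely verifying that $\n^{p|\bq}$ really is the total-degree counter on formal monomials in the $\z2$-graded setting, so that the expansion decouples into independent classical equations for the coefficients $f_{\boldsymbol{\alpha}}$; after that, the argument reduces to three well-known one-variable facts about smooth homogeneous functions on $\R^p$.
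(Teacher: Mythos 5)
Your proof is correct and follows essentially the same route as the paper's (very terse) argument: the observation that $\sum_\za\zx^\za\pa_{\zx^\za}$ counts the formal degree of a monomial — which is exactly the paper's identity $\zx^i\pa_{\zx^i}(\zx^{\bza})=\bza_i\zx^{\bza}$ — reduces the problem coefficientwise to the classical Euler Homogeneous Theorem on $\R^p$. You simply spell out the details (the sign cancellation, the decoupled PDE for each $f_{\boldsymbol{\alpha}}$, and the case analysis on $|\boldsymbol{\alpha}|$) that the paper leaves implicit.
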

\begin{proof} The proof easily follows from the Euler's Homogeneous Theorem on $\R^p$, and the observation that
$$\zx^i\pa_{\zx^i}(\zx^\bza)=\bza_i\zx^\bza.$$
\end{proof}
\no In what follows, we will view \emph{linear functions} on the manifold $\R^{p|\bq}$ as smooth functions which are 1-homogeneous with respect to $\n^{p|\bq}$.

\mn By \emph{trivial $\z2$-vector bundles of rank ${p|\bq}$} we will understand the Cartesian products $E=U\ti\R^{p|\bq}$, where $U$ is a $\z2$-manifold, with the homogeneity structure defined by the vector field $\n^{p|\bq}$viewed as a vector field on $U\ti\R^{p|\bq}$. Hence, \emph{linear functions} on $E$ are defined, again, as $\n^{p|\bq}$-homogeneous functions of weight 1. It is easy to see that the basic functions on the trivial bundle $E\to U$ are $\n^{p|\bq}$-homogeneous of weight 0. Hence, $\n^{p|\bq}$ acts trivially on pullback functions, and linear functions $f$ on $E$ are of the form
\be\label{lin}
f=\sum_a F_a{x^a}+\sum_\za G_\za{\zx^\za},
\ee
where $F_a, G_\za$ are functions on $U$ (basic functions). The space $\Lin(E)$ of linear functions on $E$ is therefore a free module over the algebra $\cA(U)=\cO_U(|U|)$ of functions on $U$, with free generators $(x^a,\zx^\za)$. The dual free module we call the \emph{module of sections} of $E$ and denote $\Sec(E)$.
\begin{definition}
A \emph{$\z2$-vector bundle of rank $(p|\bq)$} over a $\z2$-manifold $M$ is a $\z2$-fiber bundle $\zp:E\to M$ with the typical fiber $F=\R^{p|\bq}$, equipped with an atlas of local trivialisations
$$t_i : \tU_i=\pi^{-1}(U_i) \stackrel{\sim}{\rightarrow} U_i \times \R^{p|\bq}$$
such that the transition maps
$$t_{ij}=t_i\circ t_j^{-1}:(U_i\cap U_j)\ti\R^{p|\bq}\ra(U_i\cap U_j)\ti\R^{p|\bq}$$
are automorphism of the homogeneity manifolds $(U_i\cap U_j)\ti\R^{p|\bq}$.
\end{definition}
As the local weight vector field is respected by the transition maps, we have on $E$ a globally defined vector field $\n_E$ of degree 0 -- the \emph{weight vector field}. It is also the generator of the multiplication by positive reals in fibers. (Local) \emph{linear functions} on $E$ are $\n_E$-homogeneous functions of weight 1. They form a module $\Lin(E)$ over the algebra of (local) functions on $M$. Local and global sections of $E$ are defined by duality in an obvious way. In this sense elements of $\Lin(E)$ represent global sections of the dual bundle $E^*$ and \emph{vice versa}. Moreover, $M$ is canonically embedded in $E$ as the submanifold on which all linear functions vanish. For any $\bu\in |U|$ we have an obvious identification of the fiber $E_\bu$ with $\mathbf{R}^{p|\bq}$.
\begin{example}
Let $M$ be a $\z2$-manifold of dimension $(p,\bq)$. The \emph{tangent bundle} of $M$ is a $\z2$-vector bundle $\zt_M:\sT M\to M$ of rank $(p,\bq)$, glued from local trivialisations $U_i\ti\R^{p|\bq}$ with local coordinates $\big(x^a_i,\zx^\za_i,\dot x^b_i,\dot\zx^\zb_i\big)$, where $(x^a_i,\zx^\za_i)$ are local coordinates in $U_i$. The gluing transition maps are
\beas&\Phi_{ij}:(U_i\cap U_j)\ti\R^{p|\bq}\ra(U_i\cap U_j)\ti\R^{p|\bq},\\
&\Phi_{ij}\big(x^a_j,\zx^\za_j,\dot x^b_j,\dot\zx^\zb_j\big)=\Big(\zf_{ij}(x^a_j,\zx^\za_j)\,,\pa_{x^b_j}(x^a_i)\cdot\dot x^b_j+\pa_{\zx^\zb_j}(x^a_i)\cdot\dot\zx^\zb_j\,,\pa_{x^b_j}(\zx^\za_i)\cdot\dot x^b_j+\pa_{\zx^\zb_j}(\zx^\za_i)\cdot\dot\zx^\zb_j\Big),
\eeas
where
$$(x^a_i,\zx^\za_i)=\zf_{ij}(x^a_j,\zx^\za_j)$$
are the corresponding transition maps on $M$. The module $\Sec(\sT M)$ of sections of the tangent bundle is nothing but the module of vector fields on $M$. Contrary to the frequent opinion, vector fields are not graded derivations of the algebra of superfunctions, this is valid only for homogeneous vector fields.
\end{example}
\begin{definition}
\emph{Morphisms of $\z2$-vector bundles} are morphisms of the corresponding $\z2$-manifolds which relate the corresponding Euler vector fields; equivalently, the pullbacks of (local) linear functions are (local) linear functions.  A \emph{vector subbundle} of a $\z2$-vector bundle $\zp:E\to M$ is a $\z2$-submanifold $E_0$ such that the Euler vector field is tangent to $E_0$.
\end{definition}
It is easy to see that $M_0=E_0\cap M$ is a $\z2$-submanifold of $M$, and $\zp\,\big|_{E_0}:E_0\to M_0$ is again a
$\z2$-vector bundle. We say that $E_0$ is \emph{covering $M_0$}. If $M_0=M$, we say that the vector subbundle $E_0$ is \emph{full}.
\subsection{Ehresmann connections}\

\mn Let us observe that, for any vector bundle $\zp:E\to M$, the tangent bundle $\zt_E:\sT E\to E$ has another vector bundle structure, this time over $\sT M$. The corresponding Euler vector field is $\dt\n_E$ -- the tangent lift of the Euler vector field on $E$. It is obtained as the generator of the one-parameter group of diffeomorphisms $\sT h_t$ of $\sT E$, where $h_t$ is the multiplication by $e^t$ in $E$. The corresponding vector bundle projection is $\sT\zp:\sT E\to\sT M$. These two vector bundle structures are compatible, i.e., the vector fields $\dt\n_E$ and $\n_{\sT E}$ commute, so we are dealing with a \emph{double vector bundle} (see \cite{Grabowski:2009}).
\begin{definition} Let $P=(P,M,\pi,F)$ be a $\Z_2^n$-fiber bundle, and $\sT\zp:\sT P\to\sT M$ be its tangent prolongation. An \emph{Ehresmann connection} on the fiber bundle bundle $P$ is a \emph{horizontal subbundle} $\cH$ in $\sT P$, i.e., a full $\z2$-vector subbundle $\cH$ of the tangent bundle $\zt_P:\sT P\to P$ such that
$$\sT\zp(\bp)\,\big|_{\cH_\bp}:\cH_\bp\to\sT_{\pi(\bp)} M$$
is an isomorphism of $\z2$-graded vector spaces for all $\bp\in|P|$.

\mn A \emph{linear connection} in a $\z2$-vector bundle $\zp:E\to M$ is an Ehresmann connection $\cH\subset\sT E$ such that $\cH$ is additionally a vector subbundle of the $\z2$-vector bundle $\sT\zp:\sT E\to\sT M$.
In other words, the vector field $\dt{\n_E}$ is tangent to $\cH$.
\end{definition}
\begin{remark}\
\begin{enumerate}
\item Slight adaptation of the proof of Proposition \ref{prop:TransMaps}, we see that the transition functions are morphisms of trivial fibre bundles and that they satisfy the cocycle condition.
\item Note that a $\Z_2^n$-vector bundle is not just a fibre bundle with typical fibre $F = \R^{r|\mathbf{s}}$. The transition functions  must be morphisms of trivial $\Z_2^n$-vector bundles and so linear in fibre coordinates.
\end{enumerate}
\end{remark}

\subsection{Frame bundles }
Let $\zp:E\to M$ be a vector bundle of rank $(p|\bq)$, and let $\tU=\zp^{-1}(U)\simeq U\ti\R^{p|\bq}$ be a local trivialisation with the standard global coordinates $(x^a,\zx^\za)$ on $\R^{p|\bq}$. The module $\Lin(U)$ of linear functions on $\tU$ is a $\z2$-graded and free module over $\cO_M(|U|)$ with homogeneous generators $v=(x^a,\zx^\za)$. Any linear function on $\tU$ is of the form (\ref{lin}). In particular, we have the following.
\begin{proposition}
If $\tv=(\tx^a,\tzx^\za)$ is another set of free generators of the $\z2$-graded and free module over $\cO_M(|U|)$, then
\begin{align}\nn
\tx^a=A^a_bx^b+B^a_\zb\zx^\zb\\
\tzx^\za= C^\za_bx^b+D^\za_\zb\zx^\zb        \nn,
\end{align}
where $A,B,C,D$ are homogeneous functions on $U$ such that
\begin{align}\nn\deg(A^a_b)=0\,,\quad \deg(B^a_\zb)=\deg{\zb}\,, \\
\nn\deg(C^\za_b)=\deg(\za)\,,\quad \deg(D^\za_\zb)=\deg(\za)+\deg(\zb),
\end{align}
and the matrix
$$X_v^\tv=\begin{pmatrix} A & B\\
C & D
\end{pmatrix}
$$
is invertible. In other words, $X_v^\tv:U\to\GL(p,\bq)$.
Moreover, we have $X_v^v=I$ and the cocycle condition
$$X_{v'}^v\cdot X^{v'}_\tv\cdot X^\tv_v=I.$$
\end{proposition}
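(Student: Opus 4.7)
The strategy is to use directly the free graded module structure on $\Lin(\tU)$ established immediately before the statement. Since $v=(x^a,\zx^\za)$ is a free homogeneous basis of $\Lin(\tU)$ over $\cO_M(|U|)$, each $\tx^a$ and each $\tzx^\za$, being a linear function on $\tU$, admits a unique expansion
\[
\tx^a = A^a_b\,x^b + B^a_\zb\,\zx^\zb, \qquad \tzx^\za = C^\za_b\,x^b + D^\za_\zb\,\zx^\zb,
\]
with coefficients in $\cO_M(|U|)$. The degree constraints on $A,B,C,D$ then follow by inspecting each term: since $\tx^a$ has degree $\mathbf{0}$ and $x^b$ has degree $\mathbf{0}$, the coefficient $A^a_b$ must have degree $\mathbf{0}$; since $\zx^\zb$ has degree $\deg(\zb)$, $B^a_\zb$ must have degree $\deg(\zb)$; the same bookkeeping applied to the expansion of $\tzx^\za$ (whose left-hand side has degree $\deg(\za)$) yields the degrees of $C^\za_b$ and $D^\za_\zb$.

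For invertibility I would run the same argument in reverse: because $\tv$ is likewise a free homogeneous basis, each $x^a$ and each $\zx^\za$ admits a unique expansion in terms of the $\tv$-generators, producing a block matrix $X^v_\tv$ of the same graded shape. Substituting one expansion into the other and invoking uniqueness of expansion in a free module forces $X^\tv_v\cdot X^v_\tv = I = X^v_\tv\cdot X^\tv_v$. Hence $X^\tv_v$ is an invertible graded block matrix with entries in $\cO_M(|U|)$; by the functor-of-points description of $\GL(p,\bq)$ recalled in the general linear $\Z_2^n$-group example, and by the identification $M(S)\cong \Hom_{\Z_2^n\catname{Alg}}(\cO_M(|M|),\cO_S(|S|))$, this is precisely a morphism of $\Z_2^n$-manifolds $X^\tv_v:U\to\GL(p,\bq)$.

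Finally, $X_v^v=I$ is immediate from the uniqueness of the expansion of each $v$-generator in terms of itself. The cocycle condition is a triple application of the same principle: if $v,v',\tv$ are three free homogeneous bases, composing the expansion of $v$ in $v'$ with the expansion of $v'$ in $\tv$ must coincide, by uniqueness, with the direct expansion of $v$ in $\tv$, giving $X^v_{v'}\cdot X^{v'}_\tv = X^v_\tv$, which rearranges to the displayed cocycle relation. I do not anticipate any serious obstacle; the only care needed is to confirm that graded matrix multiplication in the sense used to define $\GL(p,\bq)$ matches composition of transformations of free $\Z_2^n$-graded modules, but this is the standard signed convention already built into the $\mathrm{Mat}(r|\mathbf{s})$ discussion earlier in the paper.
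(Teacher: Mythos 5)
Your proof is correct and uses exactly the ingredients the paper sets up for this purpose (the free-module structure of $\Lin(U)$ with homogeneous generators, the form \eqref{lin} of linear functions, and uniqueness of expansion); the paper itself leaves the proposition unproved as an immediate consequence of that discussion, so your argument simply makes the intended reasoning explicit. The only point worth double-checking, as you note yourself, is that the ordering conventions in the graded matrix product of $\GL(p,\bq)$ agree with composition of base changes, but this is consistent with the $\mathrm{Mat}(r|\mathbf{s})$ conventions already fixed in the paper.
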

\no We conclude that any local trivialisation $\tU=\zp^{-1}(U)\simeq U\ti\R^{p|\bq}$ of $E$ identifies the set $\Fr_U$ of all bases of free generators of the module $\Lin(U)$ with the $\z2$-manifold $\GL(p,\bq)(U)$. It is easy to see that these data give rise to a $\GL(p,\bq)$-principal bundle $\Fr$, called the \emph{frame bundle} of $E$. If $E=\sT M$, then this frame bundle we call the \emph{frame bundle of the manifold $M$} and denote $\operatorname{Fr}(M)$.

\section{Concluding remarks}\label{sec:Conc}
In this paper, we have made an initial study of principal $\Z_2^n$-bundles and established some fundamental results. The key message is that the foundations of the classical theory of principal bundles carry over to
$\Z_2^n$-geometry, however, the proofs rely on a lot of fundamental results established in earlier paper see (\cite{Bruce:2020a,Bruce:2020b,Bruce:2018,Bruce:2018b}) as well as heavy use of the functor of points. \par
There are, of course, plenty of open questions and research directions to pursue.  For example, it would be interesting to develop the theory of  principal connections and gauge theory in this setting. It is natural to consider $\z2$-graded extensions of supersymmetry and $\z2$-superfields, so we hope to further explore $\Z_2^n$-geometry and unravel links with novel physics.

\section*{Acknowledgements}
A.J.~Bruce thanks Norbert Poncin for helpful discussions about $\Z_2^n$-geometry and related subjects.   J.~Grabowski acknowledges that his research was funded by the  Polish National Science Centre grant WEAVE-UNISONO under contract number 2023/05/Y/ST1/00043.

\vskip1cm

\noindent Andrew James Bruce\\
{\tt andrewjamesbruce@googlemail.com}\\
https://orcid.org/0000-0001-8197-2263\\
Web of Science Researcher ID: AAQ-4384-2020\\

\noindent Janusz Grabowski\\ \emph{Institute of Mathematics, Polish Academy of Sciences}\\{\small \'Sniadeckich 8, 00-656 Warszawa,
Poland}\\{\tt jagrab@impan.pl}\\
https://orcid.org/0000-0001-8715-2370\\
Web of Science Researcher ID: K-1248-2013

\end{document}